\numberwithin{equation}{section}
\renewcommand{\thetheoremName}
\newcommand{\IC}{\mathbb{C}}
\newcommand{\IF}{\mathbb{F}}
\newcommand{\IN}{\mathbb{N}}
\newcommand{\IG}{\mathbb{G}}
\newcommand{\IQ}{\mathbb{Q}}
\newcommand{\IR}{\mathbb{R}}
\newcommand{\IZ}{\mathbb{Z}}
\newcommand{\calF}{\mathcal{F}}
\newcommand{\calG}{\mathcal{G}}
\newcommand{\calH}{\mathcal{H}}
\newcommand{\calO}{\mathcal{O}}
\newcommand{\calT}{\mathcal{T}}
\newcommand{\calV}{\mathcal{V}}
\newcommand{\calZ}{\mathcal{Z}}
\newcommand{\im}{\mathfrak{m}}
\def\PGL{\mathrm{PGL}}
\def\Hom{\mathrm{Hom}}
\def\End{\mathrm{End}}
\def\GL{\mathrm{GL}}
\def\rank{\mathrm{rank}}
\def\Gal{\mathrm{Gal}}
\def\SL{\mathrm{SL}}
\def\Spec{\mathrm{Spec}}
\def\Spf{\mathrm{Spf}}
\def\codim{\mathrm{codim}}
\def\Res{\mathrm{Res}}
\DeclareMathOperator\Stab{Stab}
\DeclareMathOperator\Supp{Supp}
\newtheorem{theorem}{Theorem}[section]
\newtheorem*{theorem*}{Theorem}
\newtheorem*{corollary*}{Corollary}
\newtheorem{question}[theorem]{Question}
\newtheorem{defn}[theorem]{Definition}
\newtheorem{lemma}[theorem]{Lemma}
\newtheorem{prop}[theorem]{Proposition}
\newtheorem{corollary}[theorem]{Corollary}
\newtheorem{conj}{Conjecture}
\theoremstyle{definition}
\theoremstyle{remark}
\newtheorem{remark}[theorem]{Remark}
\thanks{The author's research was partially funded by START-prize Y-966 of the Austrian Science Fund (FWF) under P.I. Harald Grobner}
\begin{document}
\title{Unlikely intersections on the $p$-adic formal ball}
\author{Vlad Serban}

\bibliographystyle{alpha}

\begin{abstract}
We investigate generalizations along the lines of the Mordell--Lang conjecture of the author's $p$-adic formal Manin--Mumford results for $n$-dimensional $p$-divisible formal groups $\calF$. In particular, given a finitely generated subgroup $\Gamma$ of $\calF(\overline{\IQ}_p)$ and a closed subscheme $X\hookrightarrow \calF$, we show under suitable assumptions that for any points $P\in X(\IC_p)$ satisfying $nP\in\Gamma$ for some $n\in\IN$, the minimal such orders $n$ are uniformly bounded whenever $X$ does not contain a formal subgroup translate of positive dimension. In contrast, we then provide counter-examples to a full $p$-adic formal Mordell--Lang result. Finally, we outline some consequences for the study of the Zariski-density of sets of automorphic objects in $p$-adic deformations. Specifically, we do so in the context of the nearly ordinary $p$-adic families of cuspidal cohomological automorphic forms for the general linear group constructed by Hida. 
\end{abstract}

\maketitle
\section{Introduction}
The classical Mordell--Lang conjecture formulates an unlikely intersection principle for subvarieties $V\subset G$ of a semi-abelian variety in characteristic $0$. Given a finitely generated subgroup $\Gamma< G(\mathbb{C})$, it states that the divisible hull $\Gamma^{div}:=\{g\in G(\mathbb{C})\vert g^n\in \Gamma \text{ for some } n\}$ of $\Gamma$ cannot have Zariski-dense intersection with a reduced and irreducible subvariety $V$ unless the latter is the translate of an algebraic subgroup of $G$ by an element in $\Gamma^{div}$. The conjecture's proof was completed by McQuillan \cite{McQuillan1995DivisionPO}, building crucially on work of Faltings, Hindry, Raynaud, Vojta and others (see, e.g.,\cite{Faltings1983, Raynaud1983,Hindry1988,Buium1992IntersectionsIJ,Vojta1996IntegralPO}). 
For tori $G=\mathbb{G}_m^n$, the conjecture had been settled previously by Laurent \cite{MR767195}, relying on work of Liardet \cite{AST_1975Liardet} and Lang \cite{MR0130219, MR0190146}.\par
In this paper, we investigate ceretain $p$-adic analytic versions of the conjecture, leading to unlikely intersection results in the context of $p$-adic formal groups. More precisely, consider the formal multiplicative group $\widehat{\IG}_m^n:=\Spf(R[[X_1,\ldots,X_n]])$ over a complete, discretely valued subring $R$ of the $p$-adic complex numbers or more generally consider an $n$-dimensional $p$-divisible formal Lie group $\mathcal{F}$ over $R$.  We let $X\hookrightarrow\Spf(R[[X_1,\ldots,X_n]])$ denote an irreducible closed formal subscheme and $\Gamma<\calF(\IC_p)$ a finitely generated subgroup, and let $\Gamma^{div}$ denote the divisible hull of $\Gamma$ as above. We examine when $X\cap \Gamma^{div}$ can be Zariski-dense in $X$ and whether $X$ has to be the translate of a formal subtorus by a point in $\Gamma^{div}$. 
When $\Gamma=\{0\}$, this reduces to examining a $p$-adic version of the so-called Manin--Mumford conjecture, when $\Gamma^{div}$ consists of the torsion points of the formal group. Our first result, Theorem \ref{thm:generalbackward}, is then a generalization of the $p$-adic formal Manin--Mumford results established by the author \cite{Serban:2016aa, serban2020padic}: 
\begin{theorem}\label{thm:mordellintro}
Let $X\hookrightarrow\calF$ denote a closed affine formal subscheme of an $n$-dimensional $p$-divisible commutative formal group and let $\Gamma\subset\calF(\overline{R[1/p]})$ be a finitely generated $\IZ_p$-module, where $\IZ_p$ acts as an endomorphism via the inclusion $\IZ_p\subset \End(\calF)$. Let $\Sigma\subseteq \Gamma^{div}$ be any subset of the divisible hull of $\Gamma$. Then exactly one of the following occurs: 
\begin{enumerate}
    \item There exists a fixed positive integer $m$ such that any $\alpha\in \Sigma\cap X(\mathbb{C}_p)$ satisfies $[p^m](\alpha)\in \Gamma$. Moreover, if $\Sigma$ is discrete in the $p$-adic topology on the unit ball $\im_{\IC_p}^n$, then for $\varepsilon >0$ small enough the same conclusion applies for any $\alpha\in \Sigma$ with $d(\alpha, X)\leq \varepsilon$.
    \item The formal subscheme $X$ contains the translate of a formal subgroup of $\calF$ of positive dimension by an element of $\Sigma$ (over a finite extension of $R$).
\end{enumerate}
\end{theorem}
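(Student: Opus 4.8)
The plan is to linearise the whole situation by means of the formal logarithm, thereby turning the divisible hull into a fixed finite-dimensional $\IQ_p$-subspace and formal subgroup translates into affine subspaces, and then to run (an enhancement of) the analytic rigidity argument from the author's $p$-adic formal Manin--Mumford work \cite{Serban:2016aa, serban2020padic}.

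First I would linearise. Since $\calF$ is $p$-divisible, its formal logarithm $\log_{\calF}$ is an isomorphism of formal groups over $R[1/p]$ between $\calF$ and $\widehat{\IG}_a^n$; together with $\exp_{\calF}$ it converges on, and identifies, suitable open polydiscs about the origin. Both conclusions (1) and (2), and the hypothesis on $\Gamma$, are insensitive to replacing $\calF,X,\Gamma,\Sigma$ by their images under $[p^N]$ for $N\gg 0$ (the map $[p^N]$ is finite flat, carries positive-dimensional formal subgroups isogenously onto positive-dimensional formal subgroups, and multiplies denominators by a bounded shift); one also loses nothing by enlarging $R$ to a finite extension and by discarding the torsion of $\Gamma$, which lies in $\Gamma^{div}$ already and alters $\min\{m:[p^m]\alpha\in\Gamma\}$ by a bounded amount. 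After these reductions we may assume everything relevant sits inside the region where $\log_{\calF}$ is étale. Passing to logarithmic coordinates, $X$ becomes the zero locus $V=V(g_1,\dots,g_t)$ of convergent power series over the valuation ring of a finite extension $L/\IQ_p$; the divisible hull $\Gamma^{div}$ is carried into $W:=\IQ_p\cdot\log_{\calF}(\Gamma)$, a finite-dimensional $\IQ_p$-subspace of $\overline{\IQ}_p{}^{\,n}$ containing the image $\Sigma'$ of $\Sigma$; translates of positive-dimensional formal subgroups of $\calF$ correspond to affine subspaces $v_0+U$ with $U$ a positive-dimensional $\IQ_p$-subspace cut out by linear forms over a finite extension. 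Fixing a $\IZ_p$-basis $w_1,\dots,w_s$ of $\log_{\calF}(\Gamma)$, a point $v\in W$ satisfies $[p^m]\exp_{\calF}(v)\in\Gamma$ precisely when $p^mv\in\IZ_pw_1+\dots+\IZ_pw_s$, so that conclusion (1) becomes the assertion that the ``$\log_{\calF}(\Gamma)$-denominators'' of the points of $\Sigma'\cap V$ are uniformly bounded.

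Since $R[[X_1,\dots,X_n]]$ is Noetherian, $X$ has finitely many irreducible components, and (1) for each component with exponent $m_i$ gives (1) for $X$ with $m=\max_i m_i$, while (2) for a single component gives (2) for $X$; so I may assume $V$ irreducible, and I argue by contradiction, assuming (2) fails. Suppose there were points $\alpha_j\in\Sigma\cap X(\IC_p)$ whose denominators $d_j$ tend to $\infty$, and set $v_j=\log_{\calF}(\alpha_j)\in V\cap W$. By compactness of the closed polydisc, after passing to a subsequence $v_j\to v_\infty\in V\cap\overline{W}=V\cap W$; after a formal-group translation by $-v_\infty$ we may take $v_\infty=0$, so $0\in V$, $v_j\to 0$, $v_j\in W$, $d_j\to\infty$. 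Writing $[p^{d_j}]\alpha_j=\gamma_j\in\Gamma$ and using compactness of $\Gamma$ one gets $\gamma_j\to\gamma_\infty\in\Gamma$; since $v_j=p^{-d_j}\log_{\calF}(\gamma_j)\to 0$ forces $\log_{\calF}(\gamma_\infty)=0$, hence $\gamma_\infty$ torsion, we may replace $\gamma_j$ by $\gamma_j-_{\calF}\gamma_\infty$ and assume $\gamma_j\to 0$ in $\Gamma$ while $p^{d_j}\,|\log_{\calF}(\gamma_j)|\to 0$. Renormalising the $v_j$ (dividing by the element of largest absolute value among their $w_i$-coordinates) and extracting once more, the directions $v_j/\|v_j\|$ converge to a nonzero $u\in W\otimes_{\IQ_p}\overline{\IQ}_p$ along which $V$ is infinitely tangent at $0$. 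The core rigidity step, carried out exactly as in \cite{serban2020padic} but with the lattice $\log_{\calF}(\Gamma)\subset W$ playing the role previously played by the origin, is that because the $v_j$ lie on the \emph{fixed} $\IQ_p$-rational subspace $W$ and accumulate at $0\in V$ with denominators blowing up, each $g_k$ must vanish identically on the smallest subspace $U\subseteq W$ defined over $L$ (or a finite extension) containing $u$; thus $U\subseteq V$ with $\dim U\geq 1$, and $U\subseteq W=\log_{\calF}(\Gamma^{div})$ corresponds to a genuine positive-dimensional formal subgroup. Finally, since $v_j\in v_j+U$ and $v_j+U=U\subseteq V$ for $j$ large enough that $v_j$ lies on the relevant stratum, the translate is by the point $\alpha_j=\exp_{\calF}(v_j)\in\Sigma$ (after translating back by $v_\infty$, by a point of $\Sigma$), contradicting the failure of (2); this yields the uniform $m$.

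The refinement for $\Sigma$ discrete is obtained from the same compactness argument: if for every $N$ some $\alpha\in\Sigma$ with $d(\alpha,X)\leq\varepsilon$ had denominator $\geq N$, then by discreteness of $\Sigma$ such points accumulate in the compact ball at a point $v_\infty$ with $d(v_\infty,X)\leq\varepsilon$, and for $\varepsilon$ small enough the local rigidity of the analytic set $X$ near $v_\infty$ reduces this to the previous case, again forcing a positive-dimensional formal subgroup translate inside $X$. The step I expect to be the main obstacle is precisely the core rigidity assertion: making the ``infinitely tangent'' direction produced by the blowing-up denominators $\IQ_p$-rational and globalising ``tangent to arbitrarily high order along $W$'' into ``identically zero on a subspace'' requires the Weierstrass-preparation and $p$-adic analytic bookkeeping of \cite{Serban:2016aa, serban2020padic}, now complicated by the fact that the accumulation point need not itself lie in $\Sigma$, so that one must track how the limiting subspace can be transported onto an actual point of $\Sigma$; a secondary subtlety is ensuring the reductions of the first paragraph (passing to $[p^N]$-images, extending $R$) genuinely preserve the precise shape of both alternatives.
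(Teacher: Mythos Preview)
Your approach differs fundamentally from the paper's, and it has a genuine gap at the linearisation step. The reduction ``after applying $[p^N]$, everything relevant sits where $\log_{\calF}$ is \'etale'' does not hold for $\Sigma$. Points of $\Gamma^{div}$ with large denominator $d$ are $[p^d]$-preimages of elements of $\Gamma$; they are $p$-adically close to nontrivial $p^d$-torsion and live near the boundary of the disc, exactly where $\log_{\calF}$ fails to be an isomorphism. Applying $[p^N]$ merely shifts denominators by a bounded amount and does not push such points into any fixed small disc. Concretely, for $\calF=\widehat{\IG}_m$, $\Gamma=[\IZ_p](p)$, and $\alpha_j=\zeta_{p^j}(1+p)^{1/p^j}-1$ with $\zeta_{p^j}$ a primitive $p^j$-th root of unity, one has $\alpha_j\in\Gamma^{-j}$ and $\log_{\calF}(\alpha_j)=p^{-j}\log(1+p)$, so $|v_j|=p^{j-1}\to\infty$. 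Thus your $v_j$ are unbounded in $W$, no convergent subsequence exists, and $v_\infty$ is unavailable. (The closed polydisc in $\IC_p^n$ is not compact in any case; the only compactness at hand is that of bounded subsets of the finite-dimensional $\IQ_p$-space $W$, and boundedness is precisely what fails.) Even granting a limiting direction $u$, the implication ``$v_j\to 0$ along $u$ with $v_j\in V$ forces $\IQ_p u\subseteq V$'' is false for analytic $V$ in general (take $V=\{y=x^2\}$), and you have not said what extra structure rescues it.

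The second issue is the appeal to \cite{Serban:2016aa,serban2020padic} for the ``core rigidity step''. Those papers, and this one, do not argue via limiting tangent directions and Weierstrass preparation along a line; the mechanism is Galois-theoretic. The paper shows that for $P\in\Gamma^{div}\setminus\Gamma^{-t}$ with $t$ large there exists $\sigma\in\Gal(K(\Gamma^{-r}))$ with $\sigma(P)-_{\calF}P\in\calF[p^r]\setminus\calF[p^{r-1}]$. Since $X$ is defined over $R$ one has $d(P,X)=d(\sigma(P),X)$, so any such $P$ on (or $\varepsilon$-close to) $X$ also lies on (or $\varepsilon$-close to) some translate $T_QX$ with $Q\in\calF[p^r]\setminus\calF[p^{r-1}]$. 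Choosing $r$ so that no such $Q$ lies in $\Stab(X)$, each $X\cap T_QX$ has strictly smaller dimension than $X$, and the theorem follows by induction on $\dim X$; the infinite-stabiliser case yields alternative~(2) directly. It is this Galois symmetry of backward orbits---absent for forward orbits, which is why the paper later produces Mordell--Lang counterexamples---that replaces the unavailable global linearisation.
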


Moreover, we obtain a stronger result in the case of the formal multiplicative group. Namely we find in Theorem \ref{thm:backwardtorus} that for a closed formal subscheme $X\hookrightarrow\widehat{\IG}_m^n$ and under the same assumptions on $\Gamma$, in addition to Theorem \ref{thm:mordellintro} if $\Gamma^{div}$ is Zariski-dense in $X$ then either: 
\begin{enumerate}
    \item $X$ \textbf{is} a formal subtorus translate, or
    \item the order $\{\min(n): g\in X\cap \Gamma^{div}, [n](g)\in \Gamma \} $ is uniformly bounded.
\end{enumerate}
We note that Theorems \ref{thm:generalbackward} and \ref{thm:backwardtorus} exhibit more generally for backward orbits under the multiplication-by-$p$ endomorphism a phenomenon known for torsion points as the Tate--Voloch conjecture: backward iterates tend to either lie squarely on a formal subscheme $X$ or are uniformly bounded away in the ultrametric distance. \par
One may interpret the above as rigidity results for $p$-adic formal power series.  Such techniques appear in work of Monsky \cite{MR614398}, together with Iwasawa-theoretic applications (see, e.g., \cite{MonskyCuoco}). Moreover, Hida proved and employed rigidity results to characterize CM families of (Hilbert) modular forms by the complexity of their Hecke fields (see, e.g., \cite{HidamodularHeckefields,Hidatranscendence,HidaHeckeFieldsGrowth}). Further rigidity results were established by Berger \cite{BergerRigidity}, as well as the author \cite{Serban:2016aa,serban2020padic}. Inspired by Hida's approach, Stubley \cite{Stubley} also uses such methods to establish finiteness results for non-CM Hilbert modular forms of partial weight one in $p$-adic families. In fact, the rigidity results listed above can be related to some version of a Manin--Mumford type theorem concerning torsion points on formal tori or more general formal groups. Part of the motivation behind this paper was to uncover interesting new questions and applications by going beyond torsion points.\par
However, we discovered an obstacle to such ambitions which highlights the subtlety of rigidity results for $p$-adic analytic functions: namely, we find counter-examples in Proposition \ref{prop:counterexample} to $p$-adic formal formulations of Mordell--Lang: 
\begin{prop}
 Let $\calF^2$ denote two copies of a one-dimensional $p$-divisible formal group over $\IZ_p$ of finite height. Let $\alpha_1,\alpha_2\in p\IZ_p$ and suppose for simplicity their $p$-adic valuations $v_p(\alpha_1)=v_p(\alpha_2)$ agree. Let $[n]$ denote the multiplication-by-$n$ endomorphism in the formal group. Then there exists $\phi\in \IZ_p[[X,Y]]$ such that for an infinite set $S$ of pairs $(n,m)\in\mathbb{N}^2$ one has:
$$\phi([n](\alpha_1),[m](\alpha_2))=0$$
and the set $S$ cannot be covered by any finite union of lines, implying that the zeroes of $\phi$ do not come from a formal subgroup translate. 
 \end{prop}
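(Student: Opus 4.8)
The plan is to engineer $\phi$ by hand so that its zero set contains a "diophantine" set of lattice points, exploiting the freedom afforded by the exponential and logarithm maps of the formal group $\calF$. Fix the formal logarithm $\log_{\calF}\colon \calF \liso \widehat{\IG}_a$ over $\IQ_p$; it converges on $p\IZ_p$ and linearizes the group law, so $\log_{\calF}([n](\alpha_i)) = n\cdot\log_{\calF}(\alpha_i)$. Write $\lambda_i := \log_{\calF}(\alpha_i) \in p\IZ_p$, and note $v_p(\lambda_1) = v_p(\lambda_2) =: v$ by the hypothesis $v_p(\alpha_1) = v_p(\alpha_2)$ together with the fact that $\log_{\calF}$ preserves valuations on $p\IZ_p$. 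In the additive coordinates $(u,w) = (\log_{\calF} X, \log_{\calF} Y)$, the condition $\phi([n](\alpha_1),[m](\alpha_2)) = 0$ becomes a condition on the entire function $\psi(u,w) := \phi(\exp_{\calF}(u), \exp_{\calF}(w))$ at the points $(n\lambda_1, m\lambda_2)$. So it suffices to construct an entire power series $\psi \in \IQ_p[[u,w]]$, converging on $(p\IZ_p)^2$ and with coefficients that clear denominators after composing back with $\exp_{\calF}$, vanishing on an infinite non-collinear set of points $\{(n\lambda_1, m\lambda_2)\}$.

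The key step is therefore the additive construction: produce $\psi(u,w)$ vanishing at $(n\lambda_1, m\lambda_2)$ for all $(n,m)$ in an infinite set $S$ not covered by finitely many lines. The natural choice is a product of linear forms. Because $v_p(\lambda_1) = v_p(\lambda_2) = v$, the ratio $\lambda_1/\lambda_2$ is a $p$-adic unit; choosing $\alpha_1, \alpha_2$ so that this unit is a rational number $a/b$ (or more robustly, approximating it), one sees that $b u - a w$ vanishes at all $(n\lambda_1, m\lambda_2)$ with $bn = am$, which is already an infinite collinear set — not enough. To get a non-collinear set one instead takes an \emph{infinite product}
$$
\psi(u,w) := \prod_{k\geq 1}\left(1 - \frac{u}{n_k \lambda_1}\right)\left(1 - \frac{w}{m_k\lambda_2}\right),
$$
or a suitable variant, where $(n_k, m_k)$ is chosen to grow fast enough (e.g. $v_p(n_k), v_p(m_k) \to \infty$) that the product converges $p$-adically on the closed ball of radius $p^{-v}$, hence as an entire function on $(p\IZ_p)^2$ after rescaling. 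This forces $\psi(n_k\lambda_1, w) = 0$ for all $w$ and $\psi(u, m_k\lambda_2) = 0$ for all $u$; in particular $\psi$ vanishes on the infinite set $S = \{(n_k\lambda_1, m_\ell\lambda_2)\}_{k,\ell}$, which manifestly cannot be covered by finitely many lines. One then sets $\phi(X,Y) := \psi(\log_{\calF}X, \log_{\calF}Y)$ and checks, using that $\log_{\calF}$ and $\exp_{\calF}$ have $p$-integral structure up to bounded denominators (Honda/Hazewinkel theory for $p$-divisible formal groups of finite height over $\IZ_p$), that after multiplying by a suitable power of $p$ the result lies in $\IZ_p[[X,Y]]$; since we only need \emph{some} $\phi \in \IZ_p[[X,Y]]$, the denominators are harmless.

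The main obstacle I anticipate is the convergence bookkeeping: ensuring simultaneously that (i) the infinite product defining $\psi$ converges on a polydisc large enough to contain all the relevant points $(n\lambda_1, m\lambda_2)$ — which requires controlling $v_p(n_k\lambda_1)$ from below uniformly, forcing a careful choice of the sequence $(n_k, m_k)$ along which $v_p(n_k) \to \infty$; and (ii) that composing with $\exp_{\calF}$, whose radius of convergence is exactly $p^{-1/(p-1)}$, does not destroy convergence — this is why the hypothesis $\alpha_i \in p\IZ_p$ (so $\lambda_i \in p\IZ_p$, safely inside the disc of convergence) is essential, and why one works with the logarithmic coordinates throughout. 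A secondary subtlety is verifying the non-collinearity claim rigorously: one must check that the doubly-indexed grid $\{(n_k\lambda_1, m_\ell\lambda_2)\}$ genuinely escapes every finite union of lines $\{au + bw = c\}$, which follows because fixing such a line forces at most one value of the ratio $n_k/m_\ell$ (after accounting for the constant term via a pigeonhole on $k$), whereas the grid realizes infinitely many distinct ratios. Finally one records that a formal subgroup translate of $\calF^2$ of positive dimension would, in logarithmic coordinates, be exactly an affine line, so the non-collinearity of $S$ certifies that the zero locus of $\phi$ is not such a translate, completing the argument.
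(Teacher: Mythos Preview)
Your approach has a genuine gap: the infinite product you write down does not converge, and in fact the target you are aiming for---a nonzero $\phi\in\IZ_p[[X,Y]]$ vanishing on an entire grid $\{[n_k](\alpha_1)\}\times\{[m_\ell](\alpha_2)\}$---is impossible. For the product $\prod_{k\geq 1}\bigl(1-u/(n_k\lambda_1)\bigr)$ to converge $p$-adically one needs $u/(n_k\lambda_1)\to 0$, i.e.\ $\lvert n_k\lambda_1\rvert_p\to\infty$; but $n_k\in\IN$ forces $\lvert n_k\lambda_1\rvert_p\leq\lvert\lambda_1\rvert_p<1$, so the factors never approach $1$. Taking $v_p(n_k)\to\infty$ makes things worse: then $n_k\lambda_1\to 0$ and the factors blow up. More decisively, suppose some nonzero $\phi\in\IZ_p[[X,Y]]$ vanished identically on each vertical line $X=[n_k](\alpha_1)$. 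Writing $\phi=\sum_j g_j(X)Y^j$ with $g_j\in\IZ_p[[X]]$, each $g_j$ would vanish at the infinitely many distinct points $[n_k](\alpha_1)$ in the open unit disk, forcing $g_j\equiv 0$ by Weierstrass preparation. So the grid strategy cannot produce a nonzero $\phi$; the zero locus you want is simply too large for a power series over a discrete valuation ring.

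A secondary issue: $\log_{\calF}$ and $\exp_{\calF}$ do \emph{not} have bounded denominators over $\IZ_p$ (already $\log(1+X)$ has denominator $n$ in degree $n$), so composing an infinite series $\psi$ with $\log_{\calF}$ and then clearing a single power of $p$ will not land you in $\IZ_p[[X,Y]]$.

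The paper's construction sidesteps both problems by building $\phi$ as a \emph{graph} $Y-f(X)$ with $f\in\IZ_p[[X]]$, so that it vanishes at one point $([n_j](\alpha_1),[m_j](\alpha_2))$ per index $j$ rather than on whole lines. The series $f$ is obtained as a limit of polynomials $\phi_j\in\IZ_p[X]$ via Newton-type interpolation: $\phi_{j+1}=\phi_j+c_j\prod_{i\leq j}(X-[n_i](\alpha_1))$, where the correction $c_j$ is forced by the new interpolation condition and shown to lie in $\IZ_p$ by choosing $n_{j+1}$ with rapidly growing $p$-adic valuation (so the denominator $\prod_i\lvert[n_i](\alpha_1)\rvert_p$ is controlled) and choosing $m_{j+1}$ close to the unique $A_j\in\IZ_p$ with $[A_j](\alpha_2)=\phi_j(0)$ (using density of $\IZ$ in $\IZ_p$). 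Integrality is then automatic at every stage, and the non-collinearity of $\{(n_j,m_j)\}$ is arranged by hand by perturbing the $m_j$ so that the successive slopes are all distinct.
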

These counter-examples are inspired by investigations of the dynamical Mordell--Lang Conjecture (see e.g., \cite{BenedettoGhiocaGapPrinciple, GhiocaBellTuckerdynamicalMLbook}). The results above therefore seem to indicate a certain dichotomy in the nonarchimedean setting. Namely, for backward iterates under non-invertible endomorphisms of formal groups, Galois symmetry, ramification considerations and the fact that the underlying rings of functions are Noetherian suffices to push through unlikely intersection results beyond the algebraic setting. However, for forward iterates, where for instance one does not have such Galois symmetry at one's disposal, the results do not typically extend beyond algebraic functions. A natural question, then, is whether the results for backward iterates along the lines of \ref{thm:mordellintro} hold for more general maps not necessarily known to be endomorphisms of formal groups. For instance, one might consider preperiodic points for the nonarchimedean dynamical systems studied by Lubin \cite{Lubindynamical}, even though such dynamical systems are in many instances known or believed to originate from formal groups (see, e.g., \cite{Sarkis2010,JoelSpecterdynamical,Hsia_Liramification,BergerLubinAMS}). We plan to address this in future work.\par

Finally, in Section \ref{section:applications} we discuss applications to $p$-adic variational techniques, where spaces $\Spf(R[[X_1,\ldots,X_n]])$ occur naturally as components of $p$-adic weight spaces $\Spf(\Lambda)$ parametrizing $p$-adic families of automorphic forms or as parametrizing deformations of Galois representations. What is more, there is a class of \emph{special points} on these spaces: on weight space we call them \emph{locally algebraic} weights. It is at a subset of these that $p$-adic families may interpolate classical automorphic forms. Similarly, there is a notion of special or \emph{automorphic} points on deformation spaces. The locally algebraic points can be viewed as sitting inside the divisible hull of a finitely generated subgroup of $\widehat{\IG}_m^n(\overline{\IQ}_p)$. 
One may thus hope to leverage unlikely intersection results to show that, in instances when this is expected, Hecke eigensystems related to automorphic forms do not deform into $p$-adic families with a dense set of \emph{classical} Hecke eigensystems coming from automorphic forms. This would inform the expectation that cuspidal Hecke eigensystems that do not arise from functoriality constructions are sparse for reductive groups $G$ with discrete series defect $\ell_0>0$. See for instance work of Ash, Pollack and Stevens \cite{AshPollackStevens} where this is established in some examples for $\GL_3(\IZ)$ or see \cite{Serban2018} for examples of families interpolating finitely many Bianchi modular forms. Work of Calegari--Mazur and Childers \cite{MR2461903, Childers2021GaloisDS} establishes similar results for certain deformations of Galois representations. 
\par 
In order to better illustrate an approach based on unlikely intersections, let $\calH$ denote a component of a nearly ordinary $p$-adic family of cuspidal cohomological automorphic forms constructed by Hida \cite{MR1313784,HidaGLN} for $\GL_2$ or more generally $\GL_N$ over $F$ an arbitrary number field. Assuming $F$ has $r_2>0$ complex places, we consider a component $\calV$ of the support of $\calH$ in weight space,  which on each of the finitely many components may be viewed as a formal subscheme $\calV\hookrightarrow\widehat{\IG}_m^n=\Spf(R[[X_1,\ldots,X_n]])$, conjecturally of codimension $r_2$. The weights at which $\calH$ may interpolate classical automorphic forms lie in a subset $\Delta$ (the arithmetic, locally parallel subset) of the locally algebraic weights. An optimistic strategy to prove finiteness or more generally lack of density of the classical automorphic forms interpolated by the family, similar to the way the Mordell Conjecture is deduced from Mordell--Lang for Jacobians of curves, would therefore proceed as follows: 
\begin{enumerate}
    \item \textit{(unlikely intersection result)} If $\calV \cap \Delta$ is infinite (resp. Zariski-dense in $\calV$), then $\calV$ contains (resp. is) the translate of a positive-dimensional formal subtorus of $\widehat{\IG}_m^n$. 
    \item \textit{(exclusion step)} Show that the support $\calV$ cannot be (or contain) a positive-dimensional formal subtorus (normalizing so that $\calV\hookrightarrow\widehat{\IG}_m^n$ passes through the origin).
\end{enumerate}
However, given the Mordell--Lang counter-examples, the conclusion in the first step is too strong to hold in general. Rather we have (see Corollary \ref{cor:weights}): 

\begin{corollary}\label{cor:weightsintro}
Let $\calV$ denote an irreducible component of the Zariski-closure in weight space $\Spf(\Lambda)$ of a set of locally algebraic points. Then either $\calV$ is the translate of a formal subtorus by a locally algebraic point, or there exists a fixed $k>0$ such that integer weights are Zariski-dense in $[p^k](\calV)$, where $[p^k](\mathbf{\alpha})=(1+\mathbf{\alpha})^{p^k}-1$ denotes the corresponding endomorphism of the formal torus.
\end{corollary}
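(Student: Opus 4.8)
The plan is to deduce Corollary~\ref{cor:weightsintro} from Theorem~\ref{thm:backwardtorus}, once the arithmetic of (a component of) Hida's weight space has been translated into the language of the formal torus and its divisible hulls. Concretely, I would fix one of the finitely many components of $\Spf(\Lambda)$ meeting $\calV$, identify it with $\widehat{\IG}_m^n=\Spf(R[[X_1,\ldots,X_n]])$, and recall the standard description of $p$-adic weights: a locally algebraic weight is a character $\chi=\chi_{\mathrm{alg}}\cdot\chi_{\mathrm{fin}}$ with $\chi_{\mathrm{alg}}$ algebraic (a product of powers $y\mapsto y^{j}$ of the local units above $p$) and $\chi_{\mathrm{fin}}$ of finite order. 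Under the identification, $\chi_{\mathrm{fin}}$ corresponds to a torsion point of $\widehat{\IG}_m^n$ and $\chi_{\mathrm{alg}}$ to a point of the finitely generated $\IZ_p$-submodule $\Gamma\subset\widehat{\IG}_m^n(\overline{\IQ}_p)$ spanned by the algebraic weight directions; here $\Gamma$ is $p$-torsion-free, the integer weights form a $[p]$-stable subset of $\Gamma$, and the full set $\Sigma$ of locally algebraic weights lies in $\Gamma^{div}$ (note that, $\Gamma$ being a $\IZ_p$-module and $[m]$ an automorphism of $\widehat{\IG}_m^n$ for $m\in\IZ_p^{\times}$, one has $\Gamma^{div}=\{\,g:[p^{a}](g)\in\Gamma\text{ for some }a\ge 0\,\}$). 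Since $\calV$ is an irreducible component of the Zariski closure of a set of locally algebraic points, restricting to the locus of $\calV$ away from the other components shows in the usual way that $\Sigma\cap\calV$ is Zariski-dense in $\calV$; in particular $\Gamma^{div}$ is Zariski-dense in $\calV$, so Theorem~\ref{thm:backwardtorus} applies with $X=\calV$.

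Theorem~\ref{thm:backwardtorus} then yields one of two cases. If $\calV$ is a formal subtorus translate, say $\calV=x_{0}\oplus H$ with $H$ a formal subtorus, then choosing any $\alpha\in\Sigma\cap\calV$ (possible by density) gives $\calV=\alpha\oplus H$, so $\calV$ is the translate of $H$ by a locally algebraic point --- the first alternative. Otherwise the minimal orders $\min\{n:[n](g)\in\Gamma\}$, for $g\in\calV\cap\Gamma^{div}$, are uniformly bounded; since each such minimal $n$ is necessarily a power of $p$ (writing $n=p^{a}m$ with $\gcd(m,p)=1$, the automorphism $[m]$ of $\widehat{\IG}_m^n$ gives $[p^{a}](g)=[m]^{-1}([n](g))\in\Gamma$, so $m=1$ by minimality), there is a fixed $k$ with $[p^{k}](g)\in\Gamma$ for every $g\in\calV\cap\Gamma^{div}$, in particular for every locally algebraic $g\in\calV$.

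It remains to see that such $[p^{k}](g)$ is an integer weight. Decomposing $g=g_{\mathrm{fin}}\oplus g_{\mathrm{alg}}$, we get $[p^{k}](g)=[p^{k}](g_{\mathrm{fin}})\oplus[p^{k}](g_{\mathrm{alg}})$, where $[p^{k}](g_{\mathrm{alg}})$ is again an integer weight (the $(p^{k}j)$-power character) and so lies in $\Gamma$, while $[p^{k}](g_{\mathrm{fin}})$ is torsion. As $[p^{k}](g)\in\Gamma$ and $\Gamma$ is $p$-torsion-free, the torsion element $[p^{k}](g_{\mathrm{fin}})=[p^{k}](g)\ominus[p^{k}](g_{\mathrm{alg}})$ vanishes, so $[p^{k}](g)=[p^{k}](g_{\mathrm{alg}})$ is an integer weight. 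Finally $[p^{k}]\colon\widehat{\IG}_m^n\to\widehat{\IG}_m^n$ is a finite morphism, hence closed, so $[p^{k}](\calV)$ is a closed formal subscheme and $[p^{k}](\Sigma\cap\calV)$, being the image of a Zariski-dense set under a finite map, is Zariski-dense in $[p^{k}](\calV)$; since it consists of integer weights, integer weights are Zariski-dense in $[p^{k}](\calV)$ --- the second alternative. The case $\dim\calV=0$ is covered by the same reasoning, $\calV$ then being a single locally algebraic point, a translate of the trivial subtorus.

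I expect the part requiring the most care to be not the formal-group manipulation above --- essentially bookkeeping once Theorem~\ref{thm:backwardtorus} is in hand --- but the translation in the first step: verifying that the arithmetic, locally parallel subset of locally algebraic weights really does land in $\Gamma^{div}$ for a \emph{single} finitely generated submodule $\Gamma$ that is $p$-torsion-free and contains the integer weights as a $[p]$-stable subset. This is where the ramification of $F$ above $p$, and the way the locally parallel condition interacts with the component decomposition of $\Spf(\Lambda)$, have to be handled with care.
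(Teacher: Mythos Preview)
Your approach is essentially the paper's own: reduce to a single component of weight space, identify it with $\widehat{\IG}_m^n$, exhibit the locally algebraic points as lying in $\Gamma^{div}$ for $\Gamma$ the $\IZ_p$-module of algebraic weights, and invoke Theorem~\ref{thm:backwardtorus}. The paper's proof is much terser (it simply writes down the explicit form $P_\chi=((1+\gamma_1)^{k_1}\zeta_1-1,\ldots,(1+\gamma_d)^{k_d}\zeta_d-1)$ and says the result follows), whereas you spell out the passage from bounded orders to integer weights via torsion-freeness of $\Gamma$ and the closedness of $[p^k]$; these details are correct and are exactly what the paper is implicitly using.

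One small remark: your final paragraph worries about the ``arithmetic, locally parallel'' condition, but that condition plays no role in Corollary~\ref{cor:weightsintro} itself---it only enters later when specializing to Hida families. For the corollary as stated, the translation step is just the explicit coordinate description above, which is entirely routine once one fixes topological generators of $1+p\IZ_p$.
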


Furthermore, it is in general unclear how to carry out the second step of excluding a component that is a formal torus: at the present time we do not know enough about the embedding $\calV\hookrightarrow\widehat{\IG}_m^n$ to successfully apply this strategy in general, even for components where finiteness (or lack of density) of weights in $\calV \cap \Delta$ presumably holds. For instance, just establishing the codimension of $\calV$ to be $r_2$ as conjectured by Hida would amount to showing a $\GL_N$-version of the Leopoldt conjecture (see \cite{HidaGLN,Khare2014PotentialAA}). In addition to that, it actually \emph{can} happen in our context that the component $\calV$ is a formal subtorus and thus a dense set of classical points is interpolated. \par
For example, consider the case of $\GL_2$ over an imaginary quadratic field $F$. For a split prime $p$, a Bianchi cuspidal modular form which contributes to the $p$-ordinary part of the cohomology of a congruence subgroup of $\PGL_2(\calO_F)$ deforms to a $p$-adic family whose support here is known (\cite[Theorem 6.2]{MR1313784}) to have codimension $r_2=1$ in weight space. The latter is a finite union of two-dimensional formal disks $\Spf(R[[X_1,X_2]])$. In \cite{Serban2018} we obtain finiteness results for classical automorphic points on such ordinary families via the two-step strategy above. Here, the Zariski-closure of algebraic weights in $\Delta$ is simply the diagonal, so that the unlikely intersection result for infinite $\calV \cap \Delta$ forces the support $\calV$ to be either: 
\begin{enumerate}[(a)]
    \item the diagonal $D$ in $\Spf(R[[X_1,X_2]])$ or 
    \item a one-dimensional formal torus of finite intersection with $D$. 
\end{enumerate}
For concrete $p$-adic families, both of these options can be excluded (provided finiteness indeed holds) by checking computationally for Hecke eigensystems at finite levels which ought to arise via specialization at weights in $\calV \cap \Delta$. However, base-changing a $p$-adic family over $\IQ$ to $F$ we obtain a family whose support indeed contains the diagonal $D$ and interpolates a dense set of automorphic forms. \par

Similarly, Ash, Pollack and Stevens \cite{AshPollackStevens} find in their terminology some \emph{arithmetically rigid} eigenpackets for $\GL_3/\IQ$ by an algebraic rigidity argument \cite[Theorem 8.4]{AshPollackStevens} together with explicit computation (see \cite[Section 9]{AshPollackStevens}). Here, it is families arising via symmetric-square lifts of modular forms that violate arithmetic rigidity. \par
It would be interesting to investigate further to what extent examples where density does hold must arise via functoriality constructions  (see, e.g., \cite[Conjecture 0.1]{AshPollackStevens} and \cite[Conjecture 1.3.]{MR2461903}). To that end, we establish in Section \ref{section:applications} some obstructions to density for Hida families for $\GL_2$ over arbitrary number fields (see Theorem \ref{thm:hidafamapplication} and Proposition \ref{prop:generalfields}) and discuss a few possible further avenues for applications. \par
Even though progress on such questions will require additional work given some of the aforementioned limitations of rigidity methods, we hope that they provide a useful tool to tackle some of the natural questions arising from $p$-adic variational techniques. 

\section{Unlikely intersections beyond formal Manin--Mumford}\label{section:rigidity}
\subsection{Goals and Notations} 
In this section, we prove our main unlikely intersection results, which may also be interpreted as rigidity results for formal $p$-adic power series. We denote by $\IC_p,\calO_{\IC_p}$ and $\im_{\IC_p}$ the $p$-adic complex numbers together with their valuation ring and maximal ideal, respectively. We let throughout $R\subset\calO_{\IC_p}$ denote a complete discrete valuation ring of mixed characteristic $(0,p)$ and write $K$ for its field of fractions. Our main goal is to study which closed, affine formal subschemes over $R$ 
$$X\hookrightarrow\Spf(R[[X_1,\ldots,X_n]])$$ 
contain a Zariski-dense set of \emph{special points}. The special points we consider are subsets of the divisible hull of a finitely generated subgroup of some $n$-dimensional commutative $p$-divisible formal group law over $R$. Our main concern in this paper is when the ambient formal group is an $n$-dimensional formal torus $\widehat{\IG}_m^n=\Spf(R[[X_1,\ldots,X_n]])$ and we wish to investigate a $p$-adic formal version of the multiplicative Mordell-Lang Conjecture. Namely, we want to examine under which assumptions formal subschemes $X$ with a Zariski-dense set of special points have to themselves be \emph{special subschemes}: schemes whose components are translates of a formal subtorus by a special point. \par
However, it makes sense to consider more general formal groups: for instance beyond $\widehat{\IG}_m^n$ we may consider powers $\calG^n$ of an arbitrary finite height one-dimensional formal group $\calG$ over $R$ such as a Lubin--Tate formal group and study the same question. Or one may even consider general $n$-dimensional $p$-divisible formal groups $\calF$ such as the completion along the identity section of an abelian scheme over $R$. Thus, a number of our results will be established in some greater generality beyond the formal multiplicative group case. \par
 We shall slightly abuse notations and write $\calF(A)$ for the points of $\calF$ over the topologically nilpotent elements of an $R$-algebra $A$. 
In this $p$-adic setting, given generators $\{\gamma_1,\ldots \gamma_r\}\in \calF(\IC_p)$ we may write the special points as follows: 
using the endomorphisms $\IZ_p\hookrightarrow\End(\calF)$ we set (with addition denoting the formal group law):
$$\Gamma:=\{\sum_{j=1}^n[a_{j}](\gamma_j)\vert a_j\in \mathbb{Z}_p\},$$
denoting by $[a](\mathbf{X})$ for $a\in \IZ_p$ the corresponding endomorphism of $\calF$. That is, we may even consider the $\IZ_p$-module generated by $\{\gamma_1,\ldots \gamma_r\}$ rather than just the $\mathbb{Z}$-module generated, and we do so unless otherwise specified.
Throughout we also employ boldface letters to abbreviate for multiple variables or multiple components of elements. We then may write the special points as
$$\Gamma^{div}:=\bigcup_{i=0}^\infty \Gamma^{-i},$$
where $\Gamma^{-i}=\{\mathbf{x}\in \im_{\IC_p}^n\vert [p^i](\mathbf{x})\in \Gamma\}$ and $[p^i](\mathbf{X})\in R[[\mathbf{X}]]$ is the multiplication-by-$p$ endomorphism in the formal group. We also typically assume the generators $\gamma_i$ are algebraic over $K$. \par
Our study of the Zariski-closure of sets of special points may then be split into two parts: we first study the backwards iterates under multiplication-by-$p$ of elements in $\Gamma$. These are defined over increasingly ramified field extensions, and we prove that in the absence of a formal subgroup translate in $X$ we have that $X(\mathbb{\IC}_p)\cap\Gamma^{div}\subset\Gamma^{-m}$ for some $m\in \IN$. When $\Gamma=\{0\}$, this is equivalent to a formal $p$-adic version of the Manin--Mumford Conjecture for curves, though for formal tori we prove a  generalization of the full Manin--Mumford statement. Second, we examine the case when the special points just consist of a finitely generated $\IZ$ or $\IZ_p$-module $\Gamma$. We shall see that in this second case in contrast unlikely intersection results of Mordell--Lang type do not typically hold.  
\subsection{Backward orbits}\label{subsec:backward}
We consider backward iterates of $\Gamma$ under the multiplication-by-$p$ map. Similar to the Manin--Mumford type results occurring for $\Gamma=\{0\}$ and established by the author in this formal setting in \cite{serban2020padic}, we observe for backward iterates that a version of the Tate--Voloch Conjecture again holds here: points in a backward orbit either lie squarely on a closed formal subscheme or are uniformly bounded away from it for a suitable $p$-adic distance on the unit polydisk. To that end, for a closed affine formal subscheme $X\hookrightarrow \calF$ given by $X=\Spf(R[[X_1,\ldots, X_n]]/I)$ for some ideal of definition $I$, we define for any $P\in\calF(\IC_p)$ the distance function
$$d(P,X):=\max_{f\in I}(\vert f(P)\vert_p),$$
using the $p$-adic absolute value $\vert-\vert_p$ (which we may normalize so that $\vert p\vert_p=1/p$). The distance depends on the choice of normalization of the $p$-adic absolute value and the choice of coordinates on the formal Lie group. However, the distance from a torsion point to a formal subscheme is invariant under automorphisms of the ambient formal Lie group, and this suffices for our statements to be well-defined. We also record the following: 
\begin{lemma}\label{lemma:capepsilon}
Let $X=\Spf(R[[X_1,\ldots, X_n]]/I)$ for some ideal $I$. Then $I$ is finitely generated and, writing $I=(f_1,\ldots,f_k)$, the distance to any point $P\in\calF(\IC_p)$ can be computed on generators as $d(P,X)=\max_{i=1}^k(\vert f_i(P)\vert_p)$. 
\end{lemma}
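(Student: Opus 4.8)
The plan is to treat the two assertions separately, since they are essentially independent. For the finite generation of $I$, I would invoke the standard fact that the formal power series ring $R[[X_1,\ldots,X_n]]$ over a Noetherian ring is again Noetherian (a version of the Hilbert basis theorem for power series rings; see e.g.\ Matsumura or Bourbaki). As $R$ is a complete discrete valuation ring it is in particular Noetherian, hence so is $R[[X_1,\ldots,X_n]]$, and therefore every ideal of it — in particular any ideal of definition of $X$ — is finitely generated. Fix generators $I=(f_1,\ldots,f_k)$.

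For the computation of $d(P,X)$ on generators, one inequality is immediate: each $f_i\in I$, so $\max_{1\le i\le k}\vert f_i(P)\vert_p\le \sup_{f\in I}\vert f(P)\vert_p=d(P,X)$. For the reverse inequality, write an arbitrary $f\in I$ as $f=\sum_{i=1}^{k} g_i f_i$ with $g_i\in R[[X_1,\ldots,X_n]]$. By our convention $P\in\calF(\IC_p)$ has topologically nilpotent coordinates, i.e.\ $P\in\im_{\IC_p}^n$, so that $\rho:=\max_j\vert P_j\vert_p<1$. Since every coefficient of $g_i$ lies in $R\subset\calO_{\IC_p}$, each monomial $c_{\mathbf a}P^{\mathbf a}$ of $g_i(P)$ satisfies $\vert c_{\mathbf a}P^{\mathbf a}\vert_p\le\rho^{|\mathbf a|}$; hence the series defining $g_i(P)$ converges in the complete field $\IC_p$, and the ultrametric inequality gives $\vert g_i(P)\vert_p\le 1$. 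Consequently
\[
\vert f(P)\vert_p=\Bigl\vert\sum_{i=1}^{k} g_i(P)\,f_i(P)\Bigr\vert_p\le\max_{1\le i\le k}\vert g_i(P)\vert_p\,\vert f_i(P)\vert_p\le\max_{1\le i\le k}\vert f_i(P)\vert_p .
\]
Taking the supremum over all $f\in I$ yields $d(P,X)\le\max_i\vert f_i(P)\vert_p$, and combined with the trivial inequality this gives the claimed equality; in particular the supremum defining $d(P,X)$ is attained, so it is genuinely a maximum, and the value is independent of the chosen generating set.

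I do not expect a real obstacle here. The only point deserving care is the bound $\vert g_i(P)\vert_p\le1$, which amounts to the statement that evaluation at a point of the open unit polydisk is a well-defined ring homomorphism $R[[X_1,\ldots,X_n]]\to\calO_{\IC_p}$ that does not increase $\vert\cdot\vert_p$; this uses both that elements of $R[[X_1,\ldots,X_n]]$ have all coefficients of absolute value at most $1$ and that the coordinates of $P$ are topologically nilpotent. Everything else is the Noetherianity of the power series ring together with repeated use of the ultrametric inequality.
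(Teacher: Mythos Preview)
Your proof is correct and follows essentially the same approach as the paper: Noetherianity of $R[[X_1,\ldots,X_n]]$ for finite generation, then the ultrametric inequality together with $\vert g_i(P)\vert_p\le 1$ to bound $\vert f(P)\vert_p$ by the maximum over generators. You are in fact slightly more explicit than the paper in justifying the bound $\vert g_i(P)\vert_p\le 1$.
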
 
\begin{proof}The ring $R[[X_1,\ldots, X_n]]$ is Noetherian and thus $I$ is finitely generated. The fact that it suffices to compute the distance on the generators $f_i$ is a feature of the ultrametric setting: writing any $f\in I$ as a sum $\sum_{j=1}^s g_j f_{i_j}$ for $i_j\in\{1,\ldots,k\}$ we have that $\vert \sum_{j=1}^s g_j f_{i_j}(P)\vert_p\leq \max_{j=1}^s\vert g_j f_{i_j}(P) \vert_p\leq \max_{j=1}^s\vert f_{i_j}(P) \vert_p$. 
\end{proof}

We shall employ a similar strategy to \cite{serban2020padic} for the Manin--Mumford case: given a formal subscheme $X\hookrightarrow\calF$, we consider the points in $\calF(\IC_p)$ stabilizing $X$ under translation via the formal group law.
We therefore define the \emph{stabilizer} on the level of points via: 
$$\Stab(X)(A):=\{P\in\calF(A)\vert X(A)+_\calF P= X(A)\}$$
for a commutative nilpotent $R$-algebra $A$. When $\Stab(X)(\overline{K})$ is infinite, we show that the stabilizer yields a positive-dimensional closed formal subgroup of $\calF$ over a finite extension of $R$ whose translate is contained in $X$. If however $\Stab(X)(\overline{K})$ is finite, we shall deduce the result using the action of the Galois group $\Gal(K)$ on the backward orbits. Let $\calF[p^r]$ denote the $p^r$-torsion points on the formal group $\calF$. We have the following result, inspired by \cite[Lemma 2]{Volochintegrality}, concerning the Galois action: 

\begin{lemma} \label{lemma:Volochgeneral}
Let $\calF$ denote an $n$-dimensional commutative $p$-divisible formal group over $R$ a complete discretely valued subring of $\calO_{\IC_p}$. Let $K$ denote the field of fractions of $R$ and let $r>1$ be a fixed integer. Let $\Gamma\subset\calF(\overline{K})$ denote a finitely generated $\IZ_p$-module, where $\IZ_p$ acts as an endomorphism via the inclusion $\IZ_p\subset \End(\calF)$ and set $\Gamma^{-i}:=\{P\in\calF(\mathbb{C}_p)\vert [p^i]P\in \Gamma\}$. Finally, set $L:=K(\Gamma^{-r})$ and let $P\in \bigcup_{i=0}^\infty \Gamma^{-i}$ denote a point.
Assume that the least integer $n$ such that $[p^n](P)$ is defined over $L$ satisfies $n>r-1$. Then there exists $s\in \Gal(L)$ such that $s(P)-P\in \calF[p^r]\setminus\calF[p^{r-1}]$, where subtraction is performed using the formal group law of $\calF$. 

\end{lemma}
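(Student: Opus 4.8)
\noindent\emph{Proof plan.} Set $P_j:=[p^j](P)$ and let $N$ be the least integer with $P_N\in\calF(L)$, so the hypothesis says $N\ge r$ and $P_j\notin\calF(L)$ for all $j<N$. The plan is to turn the desired conclusion into a field-theoretic statement about the tower $L\subseteq L(P_{N-1})\subseteq\cdots\subseteq L(P_0)$ and then to run a Kummer-cocycle argument in the spirit of \cite[Lemma 2]{Volochintegrality}. The one place I would use the specific field $L=K(\Gamma^{-r})$ is the observation that $\calF[p^r]\subseteq\Gamma^{-r}\subseteq\calF(L)$: indeed $[p^r]$ annihilates $\calF[p^r]$ and $0\in\Gamma$, so $\calF[p^r]\subseteq\Gamma^{-r}$, and $\Gamma^{-r}$ is $L$-rational by definition. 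In particular $\Gal(L)$ acts trivially on the whole of $\calF[p^r]$.

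First I would reduce the statement. Since $[p^r]$ and $[p^{r-1}]$ have coefficients in $R\subseteq L$, they commute with every $s\in\Gal(L)$, so $[p^r]\bigl(s(P)-P\bigr)=s(P_r)-P_r$ and $[p^{r-1}]\bigl(s(P)-P\bigr)=s(P_{r-1})-P_{r-1}$ (subtraction in $\calF$). Hence $s(P)-P\in\calF[p^r]\setminus\calF[p^{r-1}]$ exactly when $s$ fixes $P_r$ and moves $P_{r-1}$, and such an $s$ exists inside $\Gal\bigl(L(P_r)\bigr)\subseteq\Gal(L)$ precisely when $P_{r-1}\notin\calF\bigl(L(P_r)\bigr)$. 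When $N=r$ this is immediate, as then $L(P_r)=L$ and $P_{r-1}\notin\calF(L)$; so the substance is to prove $P_{r-1}\notin\calF(L(P_r))$ also when $N>r$.

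For that I would argue with the Kummer cocycle $c\colon\Gal(L)\to\calF[p^{\,N-r+1}]$, $c(s):=s(P_{r-1})-P_{r-1}$, which is valued in $\calF[p^{\,N-r+1}]$ because $[p^{\,N-r+1}]P_{r-1}=P_N\in\calF(L)$, and which is not identically zero because $P_{r-1}\notin\calF(L)$ — this is exactly where the hypothesis $N>r-1$ enters. Since $[p]\circ c$ is the analogous cocycle attached to $P_r$, with kernel $\Gal(L(P_r))$, the equality $P_{r-1}\in\calF(L(P_r))$ would force $c(\Gal(L))\cap\calF[p]=\{0\}$, which I claim is impossible. Starting from any $s$ with $c(s)\ne0$ of $p$-power order $p^k$, the triviality of the $\Gal(L)$-action on $\calF[p^r]$ gives $s\bigl(c(s)\bigr)-c(s)\in\calF[p^{\max(k-r,0)}]$; plugging this into the cocycle identity $c(s^p)=\sum_{i=0}^{p-1}s^i\bigl(c(s)\bigr)$ yields $c(s^p)=[p]c(s)+\varepsilon$ with $\varepsilon\in\calF[p^{\max(k-r,0)}]$, and a short computation — this is the point where $r>1$ is used, to keep $\varepsilon$ negligible against the leading term $[p]c(s)$ — shows $c(s^p)$ has exact order $p^{k-1}$. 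Iterating, one reaches a nonzero value of $c$ of order $p$, contradicting $c(\Gal(L))\cap\calF[p]=\{0\}$. Granting $P_{r-1}\notin\calF(L(P_r))$, I then pick $s\in\Gal(L(P_r))$ moving $P_{r-1}$; by the reduction, $s(P)-P\in\calF[p^r]\setminus\calF[p^{r-1}]$, which finishes the proof.

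The step I expect to cost the most effort is the last one — showing $c(\Gal(L))\cap\calF[p]\ne\{0\}$, i.e.\ the bookkeeping proving that passing from $s$ to $s^p$ lowers the exact order of $c(s)$ by one. This is precisely where \emph{both} $\calF[p^r]\subseteq\calF(L)$ and $r>1$ are indispensable; the reduction in the first two paragraphs, by contrast, is essentially formal.
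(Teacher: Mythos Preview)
Your proof is correct and follows essentially the same strategy as the paper's: both arguments use that $\calF[p^r]\subseteq L$ is Galois-fixed and iterate $s\mapsto s^p$ to produce the desired automorphism, with $r>1$ controlling the relevant error. The only organizational difference is that the paper varies both the automorphism and the point at each step (setting $P_i=[p^{\,n-(r-1)-i}]P$, $s_i=s_1^{p^{i-1}}$, $Q_i=s_i(P_i)-P_i$) so that each $Q_i$ lies in $\calF[p^r]$ and the cocycle identity becomes an exact homomorphism with no error term, whereas you keep the point fixed at $P_{r-1}$ and carry the error $\varepsilon\in\calF[p^{\max(k-r,0)}]$ explicitly; both bookkeepings arrive at the same conclusion.
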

\begin{proof}
Define $P_i:=[p^{n-(r-1)-i}]P$ for $i\in \{1,\ldots,n-(r-1)\}$ and pick $s_1\in \Gal(L)$ such that $s_1([p^{n-1}]P)\neq[p^{n-1}]P$. Moreover set $s_i:=s_1^{p^{i-1}}$ and $Q_i:=s_i(P_i)-(P_i)$, noting that addition and subtraction are using the formal group law throughout the proof. We claim that 
$$Q_i\in \calF[p^r]\setminus\calF[p^{r-1}].$$
It then follows that $s_{n-r+1}\in \Gal(L)$ is the desired element since $P_{n-(r-1)}=P$, thereby establishing the lemma. We prove the claim by induction on $i$, noting that it holds for $i=1$, since $[p^r]Q_1=s_1([p^n]P)-[p^n]P=0$ and 
$[p^{r-1}]Q_1=s_1([p^{n-1}]P)-[p^{n-1}]P\neq 0$  by construction.
Assume now that the claim holds for $Q_i$. First observe that $s_i^j(P_i)-P_i=[j]Q_i$ for $j\geq1$: this is true for $j=1$ and follows easily by induction on $j$ since then
$$s_i^{j+1}(P_i)-P_i=s_i([j](Q_i))+s_i(P_i)-P_i=[j+1]Q_i, $$
using that $Q_i\in \calF[p^r]$ by induction and is therefore fixed by $s_i$.
Taking $j=p$ yields $s_i^p(P_i)-P_i=[p]Q_{i}\in \calF[p^{r-1}]$ by the induction hypothesis. We deduce that  $$[p]Q_{i+1}=[p](s_{i+1}(P_{i+1})-P_{i+1})=s_i^p([p]P_{i+1})-[p]P_{i+1}=s_i^p(P_i)-P_i=[p]Q_i.$$ 
This shows by induction that $Q_{i+1}\in \calF[p^r]$ and, provided $r>1$, that $Q_{i+1}\not\in \calF[p^{r-1}]$, concluding the proof.
\end{proof}
In particular, we shall utilize the following consequence of Lemma \ref{lemma:Volochgeneral}: 
\begin{lemma}\label{lemma:inertia} Let $\calF$ be a finite dimensional commutative $p$-divisible formal Lie group defined over $R$ and let $r>1$ as well as $\Gamma\subset\calF(\overline{K})$ a finitely generated $\IZ_p$-module, where $\IZ_p$ acts as an endomorphism via the inclusion $\IZ_p\subset \End(\calF)$, be fixed. Write $\Gamma^{-i}:=\{P\in\calF(\mathbb{C}_p)\vert [p^i]P\in \Gamma\}$ and set $L:=K(\Gamma^{-r})$. Then for a large enough integer $t$, there exists for any point $P\in \bigcup_{i=0}^\infty \Gamma^{-i}\setminus\Gamma^{-t}$ an automorphism $s\in \Gal(L)$ such that $s(P)-P\in \calF[p^r]\setminus \calF[p^{r-1}]$. 
\end{lemma}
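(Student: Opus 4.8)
The plan is to bootstrap from Lemma~\ref{lemma:Volochgeneral}, which already produces the required $s\in\Gal(L)$ whenever the least integer $n$ with $[p^n](P)\in\calF(L)$ exceeds $r-1$. So it is enough to find one integer $t$ with the property that every $P\in\bigcup_i\Gamma^{-i}\setminus\Gamma^{-t}$ satisfies $[p^{r-1}](P)\notin\calF(L)$. I would reduce this in turn to showing that the $\IZ_p$-module $V:=\calF(L)\cap\bigcup_i\Gamma^{-i}$ is contained in $\Gamma^{-T}$ for some fixed $T$: granting that, take $t:=T+r-1$, for then $[p^{r-1}](P)$ still lies in $\bigcup_i\Gamma^{-i}$, so if it were in $\calF(L)$ it would lie in $V\subseteq\Gamma^{-T}$, forcing $[p^{T+r-1}](P)\in\Gamma$, i.e.\ $P\in\Gamma^{-t}$. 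Writing $\Gamma':=\Gamma\cap\calF(L)$, a finitely generated $\IZ_p$-module because $\IZ_p$ is Noetherian, the claim $V\subseteq\Gamma^{-T}$ says precisely that $p^{T}V\subseteq\Gamma'$, i.e.\ that the $p$-primary torsion module $V/\Gamma'$ has bounded exponent.

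First I would record two preliminaries. The extension $L/K$ is finite: the exact sequence $0\to\calF[p^r]\to\Gamma^{-r}\to\Gamma\to 0$ induced by $[p^r]$ (surjective on $\calF(\IC_p)$) shows $\Gamma^{-r}$ is a finitely generated $\IZ_p$-module whose elements, being preimages under the finite flat isogeny $[p^r]$ of the algebraic points of $\Gamma$, are algebraic over $K$; as $K$ is complete, $L=K(\Gamma^{-r})$ is finite over $K$, hence discretely valued and of finite ramification over $\IQ_p$. Consequently $\calF(\calO_L)_{\tors}=\calF[p^\infty](\calO_L)$ is finite, say annihilated by $p^{e_0}$ (and $\calF[p^r]\subseteq\calF(\calO_L)$ by construction).

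For the boundedness of $V/\Gamma'$ I would pass through the formal logarithm $\log_\calF\colon\calF(\calO_L)\to L^n$, which is $\IZ_p$-linear with kernel $\calF(\calO_L)_{\tors}$ and whose image, on the fixed field $L$, lies in $\varpi_L^{-c}\calO_L^n$ for a uniformizer $\varpi_L$ and a constant $c$ (after, if necessary, composing with a fixed $[p^N]$ to enter the domain of convergence, which only shifts the final constant). If $P\in V$ then $p^iP\in\Gamma'$ for some $i$, so $\log_\calF(P)\in p^{-i}\log_\calF(\Gamma')$; hence $\log_\calF(V)$ lies in $\varpi_L^{-c}\calO_L^n\cap W_0$, where $W_0:=\log_\calF(\Gamma')\otimes_{\IZ_p}\IQ_p$ is a finite-dimensional $\IQ_p$-subspace of $L^n$. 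On a finite-dimensional $\IQ_p$-space all norms are equivalent, so the sup-norm of $L^n$ restricted to $W_0$ is comparable to the coordinate norm attached to a $\IZ_p$-basis of the lattice $\Lambda:=\log_\calF(\Gamma')\cong\Gamma'/\Gamma'_{\tors}$; therefore $\varpi_L^{-c}\calO_L^n\cap W_0\subseteq p^{-t'}\Lambda$ for some $t'$. Thus $p^{t'}\log_\calF(V)\subseteq\Lambda=\log_\calF(\Gamma')$, and since $\log_\calF$ is injective modulo torsion this yields $p^{t'}P\in\Gamma'+\calF[p^{e_0}]$ for every $P\in V$, whence $p^{t'+e_0}P\in\Gamma$. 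So $V\subseteq\Gamma^{-(t'+e_0)}$, and one takes $T:=t'+e_0$ and $t:=T+r-1$, invoking Lemma~\ref{lemma:Volochgeneral} to finish.

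The main obstacle is exactly this boundedness step: a priori $\calF(L)\cap\bigcup_i\Gamma^{-i}$ is an increasing union of finitely generated $\IZ_p$-modules with no formal reason to stabilize, and it is the interplay between the $p$-adic boundedness of $\calF(\calO_L)$ (finite ramification of $L$, from the first step) and the finite $\IZ_p$-rank of $\Gamma$ that forces stabilization; the logarithm only transports this into the transparent fact that a bounded subset of a finite-dimensional $\IQ_p$-subspace of $L^n$ has uniformly bounded $p$-denominators. A minor technical nuisance is the convergence radius of $\log_\calF$ for formal groups of height $>1$, which is why I allow an initial application of $[p^N]$; this costs only harmless constants.
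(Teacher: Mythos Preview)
Your reduction to Lemma~\ref{lemma:Volochgeneral} via the claim that $V:=\calF(L)\cap\Gamma^{div}\subseteq\Gamma^{-T}$ for some fixed $T$ is exactly the paper's first step. From there, however, the two arguments diverge. The paper establishes boundedness of $T$ by a direct valuation computation: it argues via the Newton polygon of $[p](X)-\alpha$ that passing from $\alpha$ to a $[p]$-preimage roughly halves the coordinate valuations, so that after enough steps the valuations drop below the minimal positive valuation $1/e_L$ of the discretely valued field $L$ and the point cannot remain in $\calF(\calO_L)$. Your route instead linearizes through the formal logarithm and then exploits the equivalence of norms on the finite-dimensional $\IQ_p$-space $W_0=\log_\calF(\Gamma')\otimes\IQ_p$ to trap the bounded set $\log_\calF(V)$ inside $p^{-t'}\log_\calF(\Gamma')$. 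Both are valid. The paper's argument is more elementary and self-contained, but its Newton-polygon step in dimension $n>1$ is stated somewhat tersely (``specializing to a coordinate'') and requires some care to justify uniformly. Your logarithm argument is more structural and sidesteps that delicacy entirely; the only cost is the mild bookkeeping around convergence (your $[p^N]$-shift) and the torsion correction $e_0$, both of which you handle correctly.
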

\begin{proof}
It suffices to show we can choose $t\geq 0$ large enough so that any $P\in \bigcup_{i=0}^\infty \Gamma^{-i}\setminus \Gamma^{-t}$ has the property that $[p^{r-1}]P$ is not defined over $L$. The result then follows from applying Lemma \ref{lemma:Volochgeneral}. The existence of a large enough $t$ follows from ramification considerations in the fields $K(\bigcup_{i=0}^{r+j} [p]^{-i}(\gamma))$ for any $\gamma\in\Gamma$. Indeed, since the group law is defined over $K$, the field $L$ is a finite extension of $K$ and has a discrete valuation ring. Given $\alpha\in \Gamma^{-(r+j)}\setminus \Gamma^{-(r+j-1)}$, any point $\tilde{\alpha}\in \im_{\IC_p}^n$ with $[p](\tilde{\alpha})=\alpha$ has coordinates of valuation $v_p(\tilde{\alpha}_i)\leq v_p(\alpha_i)/2$. This can be established by inspection of the slopes of the Newton polygon of $[p](X)-\alpha$ after specializing to a coordinate, using that $[p](X)-\alpha=-\alpha+p\cdot X$ up to higher order terms. For $j$ large enough one of the coordinates $\alpha_i$ of $\alpha\in \Gamma^{-(r+j)}\setminus \Gamma^{-(r+j-1)}$ is therefore not in $K(\Gamma^{-(r+j-1)})$. The result follows.
\end{proof}
For any $Q\in \calF(\IC_p)$ we define the translate $T_QX$ on the level of points by $T_QX(\IC_p)=\{x-Q\vert x\in X(\IC_p)\}$. It is easy to show (see \cite[Section 2]{serban2020padic} for a proof of Lemma \ref{lemma:translate}) the following properties of translates: 
\begin{lemma}\label{lemma:translate}
The translate $T_QX$ is a closed affine formal subscheme $i_Q:T_QX\hookrightarrow \calF$ over the ring $R[Q]$, which is Noetherian for $Q\in\overline{K}$. 
\end{lemma}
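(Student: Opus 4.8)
\emph{Proof proposal.} The plan is to exhibit $T_Q X$ as the image of $X$ under the translation-by-$[-1](Q)$ automorphism of the ambient formal group, which makes its closedness and defining equations transparent. Write $X=\Spf(R[[\mathbf{X}]]/I)$, let $F(\mathbf{X},\mathbf{Y})\in R[[\mathbf{X},\mathbf{Y}]]^n$ be the formal group law of $\calF$, and set $I_Q:=I\cdot R[Q][[\mathbf{X}]]$. Since $Q\in\calF(\IC_p)$ has coordinates $Q_i$ with $v_p(Q_i)>0$, the $n$-tuple $F(\mathbf{Y},Q)$ has entries in $R[Q][[\mathbf{Y}]]$ with topologically nilpotent constant terms, so substitution of $F(\mathbf{Y},Q)$ into an element of $R[Q][[\mathbf{X}]]$ converges $p$-adically and defines a continuous $R[Q]$-algebra homomorphism $\sigma_Q\colon R[Q][[\mathbf{X}]]\to R[Q][[\mathbf{Y}]]$, $g\mapsto g(F(\mathbf{Y},Q))$. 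First I would check, purely from the formal group axioms --- concretely, $F(F(\mathbf{Y},Q),[-1](Q))=\mathbf{Y}$ by associativity and the inverse axiom --- that $\sigma_{[-1](Q)}$ is a two-sided inverse of $\sigma_Q$, so $\sigma_Q$ is a bicontinuous isomorphism. Then $T_Q X$ is by construction the closed formal subscheme of $\Spf(R[Q][[\mathbf{Y}]])$ cut out by the ideal $\sigma_Q(I_Q)$, and the induced isomorphism $R[Q][[\mathbf{Y}]]/\sigma_Q(I_Q)\cong R[Q][[\mathbf{X}]]/I_Q$ exhibits $i_Q\colon T_Q X\hookrightarrow\calF$ as a closed affine formal subscheme over $R[Q]$, abstractly isomorphic to the base change of $X$ to $R[Q]$.

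Next I would match this with the pointwise definition: for an $R[Q]$-algebra $A$ and $y\in\calF(A)$ one has $y\in T_Q X(A)$, i.e.\ $y+_\calF Q\in X(A)$, if and only if $f(F(y,Q))=0$ for all $f\in I$, i.e.\ if and only if $\sigma_Q(f)(y)=0$; thus the subscheme just built has exactly the $A$-points $\{x-_\calF Q\mid x\in X(A)\}$ prescribed in the definition of $T_Q X$. For the last clause I would note that $R$ is a complete discrete valuation ring, hence Noetherian, so $R[Q]=R[Q_1,\ldots,Q_n]$ is a finitely generated $R$-algebra and the Hilbert basis theorem shows it is Noetherian; when $Q\in\overline{K}$ one moreover has $R[Q]\subseteq\calO_{K(Q_1,\ldots,Q_n)}$, a finite $R$-module, which is the form in which the statement gets used in the ramification arguments later.

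I do not anticipate a genuine obstacle --- hence the lemma being flagged as easy with a pointer to \cite[Section 2]{serban2020padic}. The one point needing care is the legitimacy of substituting the topologically nilpotent tuple $Q$, and then the power-series tuple $F(\mathbf{Y},Q)$, into elements of $R[[\mathbf{X}]]$, together with the continuity and invertibility of $\sigma_Q$; all of this is formal from $p$-adic completeness and the formal group axioms. Once that is in place, the closedness, the $R[Q]$-rational structure, and the Noetherianity are routine bookkeeping.
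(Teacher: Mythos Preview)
Your proposal is correct and follows exactly the route the paper has in mind: the paper does not spell out the argument here but points to \cite[Section~2]{serban2020padic}, where the translate is constructed precisely via the translation-by-$Q$ automorphism of the ambient formal group over $R[Q]$, with the points computation and Noetherianity observation as you describe. The only minor caveat is that your claim that the $Q_i$ are ``topologically nilpotent'' in $R[Q]$ is cleanest when $Q\in\overline{K}$ (so that $R[Q]$ is finite over $R$ and hence complete local with $Q_i$ in the maximal ideal); for general $Q\in\calF(\IC_p)$ one should pass to a completion, but this is exactly the subtlety you already flag and it does not affect the applications, which only use $Q\in\overline{K}$.
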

\begin{lemma}\label{translateclose}
Let $\calF$ be a formal group over $R$ and let $X$ be a closed affine formal subscheme over $R$. Then for any $\sigma\in \Gal(K)$ we have
$$d(\alpha, T_{\sigma(\alpha)-\alpha}X)=d(\alpha, X).$$
\end{lemma}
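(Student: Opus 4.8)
The plan is to pull the ideal of the translate $T_{Q}X$ back to the ideal of $X$ along the translation automorphism, evaluate, and then invoke Galois-invariance of the $p$-adic absolute value. Write $X=\Spf(R[[\mathbf X]]/I)$ and set $Q:=\sigma(\alpha)-_{\calF}\alpha$. By Lemma \ref{lemma:translate}, translation by $Q$ is an automorphism of $\Spf(R[Q][[\mathbf X]])$, corresponding on rings to the continuous $R[Q]$-algebra automorphism $X_i\mapsto F_i(\mathbf X,Q)$ (where $F=(F_i)$ is the group law), and on $\IC_p$-points to $y\mapsto y+_{\calF}Q$; moreover $T_QX$ is cut out by the ideal $I_Q$ which is the image of $I\cdot R[Q][[\mathbf X]]$ under this automorphism, i.e. is generated by the power series $f(\mathbf X+_{\calF}Q)$ for $f\in I$ (equivalently, for $f$ ranging over a finite generating set of $I$). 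Hence, computing the distance on a generating set as in Lemma \ref{lemma:capepsilon} and using the ultrametric inequality,
\[
d(\alpha,T_QX)=\max_{f\in I}\bigl\lvert f(\alpha+_{\calF}Q)\bigr\rvert_p .
\]

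Next I would simplify the argument $\alpha+_{\calF}Q$ using the formal group axioms (commutativity, associativity, and the inverse law):
\[
\alpha+_{\calF}Q=\alpha+_{\calF}\bigl(\sigma(\alpha)-_{\calF}\alpha\bigr)=\sigma(\alpha),
\]
so that $d(\alpha,T_{\sigma(\alpha)-\alpha}X)=\max_{f\in I}\lvert f(\sigma(\alpha))\rvert_p$.

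Finally, since $X$ is a closed formal subscheme over $R$, the ideal $I$ is generated by power series $f=\sum_{\mathbf k}c_{\mathbf k}\mathbf X^{\mathbf k}$ with coefficients $c_{\mathbf k}\in R$. As $\sigma\in\Gal(K)$ fixes $K\supseteq R$ and is continuous on the relevant completed field (automatic on the finite extension $K(\alpha)$ in the case where $\alpha$ is algebraic over $K$, which is the case of interest; in general one takes $\sigma$ a continuous automorphism of $\IC_p$), one has $f(\sigma(\alpha))=\sum_{\mathbf k}c_{\mathbf k}\sigma(\alpha)^{\mathbf k}=\sigma\bigl(\sum_{\mathbf k}c_{\mathbf k}\alpha^{\mathbf k}\bigr)=\sigma(f(\alpha))$. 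Because the $p$-adic absolute value on $\IC_p$ is the unique extension of the one on $\IQ_p$, it is invariant under $\Gal(K)$, whence $\lvert f(\sigma(\alpha))\rvert_p=\lvert\sigma(f(\alpha))\rvert_p=\lvert f(\alpha)\rvert_p$. Taking the maximum over $f\in I$ gives $d(\alpha,T_{\sigma(\alpha)-\alpha}X)=d(\alpha,X)$.

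The only genuinely delicate point is the first step: identifying the ideal of $T_QX$ and checking that the substitution $X_i\mapsto F_i(\mathbf X,Q)$ really defines an automorphism of $R[Q][[\mathbf X]]$, so that the distance can be read off on the images $f(\mathbf X+_{\calF}Q)$ of a generating set of $I$. This is precisely the content of Lemma \ref{lemma:translate} together with Lemma \ref{lemma:capepsilon}, so once those are granted the remainder is a short formal manipulation; the interchange of $\sigma$ with the convergent power series expansion is a harmless technical caveat in the algebraic setting where the statement is applied.
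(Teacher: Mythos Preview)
Your argument is correct and follows essentially the same route as the paper: reduce $d(\alpha,T_{\sigma(\alpha)-\alpha}X)$ to $d(\sigma(\alpha),X)$ via the formal group law (the paper phrases this as writing $\alpha=\sigma(\alpha)+(\alpha-\sigma(\alpha))$), and then conclude by $f(\sigma(\alpha))=\sigma(f(\alpha))$ together with Galois-invariance of $\lvert\cdot\rvert_p$. Your version is more explicit about the ideal of $T_QX$ and the role of Lemmas~\ref{lemma:capepsilon} and~\ref{lemma:translate}, but the substance is identical.
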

\begin{proof}
Writing $\alpha=\sigma(\alpha)+(\alpha-\sigma(\alpha))$, we see that it suffices to show that $d(\sigma(\alpha), X)=d(\alpha, X)$. For any $\phi$ in the ideal of definition of $X$, we know that $\phi(\sigma(\alpha))=\sigma(\phi(\alpha))$. But $\vert\sigma(\phi(\alpha))\vert_p=\vert\phi(\alpha)\vert_p$ since $\sigma$ acts by continuous automorphisms, and thus the result follows. 
\end{proof}
Our last preparatory result will allow us to deal with the infinite stabilizer case: 
\begin{lemma}\label{lemma:infstab}
Let $X\hookrightarrow \calF$ denote a closed affine formal subscheme of an $n$-dimensional formal Lie group over $R$. If $\Stab(X)(\overline{K})$ is infinite, then the stabilizer is a formal subgroup scheme of $\calF$ over some finite extension $L$ of $K$ and has a positive-dimensional component. Moreover, its translate by any $\gamma\in X(\mathbb{C}_p)$ is contained in $X$. 
\end{lemma}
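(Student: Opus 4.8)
The plan is to realize $\Stab(X)$ as a reduced closed formal subscheme $G\hookrightarrow\calF$ with $G(\IC_p)=\Stab(X)(\IC_p)$, to check that $G$ is a formal subgroup scheme, and to use the infinitude of $\Stab(X)(\Kbar)$ to force $G$ to be positive-dimensional. First I would replace the defining equality by a containment: if $P\in\calF(\IC_p)$ satisfies $X(\IC_p)+_\calF P\subseteq X(\IC_p)$, then translation by $P$ exhibits $\mathcal O(T_{-P}X)$ as a quotient of the Noetherian ring $\mathcal O(X_{R[P]})$ (using Lemma~\ref{lemma:translate} and the finite generation of Lemma~\ref{lemma:capepsilon}), while at the same time $\mathcal O(T_{-P}X)\cong\mathcal O(X_{R[P]})$ as rings; this surjective endomorphism of a Noetherian ring is therefore an isomorphism, so $X(\IC_p)+_\calF P=X(\IC_p)$ already holds.

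Writing $X=\Spf R[[\mathbf X]]/I$ with $I=(f_1,\dots,f_k)$, the previous step shows that $P$ lies in $\Stab(X)(\IC_p)$ iff $f_i(x+_\calF P)=0$ for all $i$ and all $x\in X(\IC_p)$. For fixed $x$ and $i$ the vanishing locus of the power series $f_i(x+_\calF\mathbf P)$ is a closed formal subscheme of $\calF$, and by Noetherianity of $R[[\mathbf P]]$ the intersection of all of them equals a finite sub-intersection; hence $\Stab(X)$ is cut out by finitely many equations whose coefficients lie in a finite extension $L/K$. Let $G\hookrightarrow\calF$ be the associated reduced closed formal subscheme over the valuation ring of $L$, so that $G(\IC_p)=\Stab(X)(\IC_p)$ and $G(\Kbar)=\Stab(X)(\Kbar)$. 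Since stabilizers are closed under $+_\calF$, under $[-1]$, and contain the identity, $\Stab(X)$ is a subgroup functor of $\calF$; as $\Kbar$-points are Zariski-dense in the reduced scheme $G$ (and $G\times_L G$ is reduced), the addition, inversion and unit morphisms of $\calF$ restrict to morphisms $G\times_L G\to G$, $G\to G$, $\Spf L\to G$, so $G$ is a formal subgroup scheme of $\calF$ over $L$.

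To see $G$ is positive-dimensional, I would argue that a closed formal subscheme $Z\hookrightarrow\calF$ of relative dimension $0$ over $\Spf R$ has $\mathcal O(Z)$ module-finite over $R$ (it is a complete Noetherian local ring with $\mathcal O(Z)/\mathfrak m_R\mathcal O(Z)$ Artinian), hence only finitely many $\Kbar$-points; since $G(\Kbar)=\Stab(X)(\Kbar)$ is infinite by hypothesis, $G$ — after possibly enlarging $L$ to split off a component — has a component of positive dimension. For the final assertion, given $\gamma\in X(\IC_p)$ and any $g\in G(\IC_p)=\Stab(X)(\IC_p)$ one has $\gamma+_\calF g\in X(\IC_p)+_\calF g=X(\IC_p)$, so $\gamma+_\calF G(\IC_p)\subseteq X(\IC_p)$; the translate $\gamma+_\calF G$ is a reduced closed formal subscheme of $\calF$ over $R[\gamma]$ (Lemma~\ref{lemma:translate}) with Zariski-dense $\IC_p$-points, all lying on $X$, so its ideal contains $I$ and $\gamma+_\calF G\subseteq X$ as formal subschemes.

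The main obstacle is the step from point-level to scheme-level statements: one must know that $\Stab(X)$, a priori only a closed formal subscheme carrying a group law on its $\IC_p$-points, is genuinely a subgroup \emph{scheme}, and that $\gamma+_\calF G$ lies \emph{schematically} inside $X$. Both reduce to a Nullstellensatz-type density of $\Kbar$- (equivalently $\IC_p$-) points in reduced formal schemes of finite type over $R$; granting this, the remaining ingredients — the Noetherian reduction $\subseteq\Rightarrow{=}$, the finiteness of the intersection defining $\Stab(X)$, and the finiteness of $\Kbar$-points in relative dimension $0$ — are routine. One can instead sidestep the density input by verifying directly that the elimination ideal defining $\Stab(X)$ is stable under the comultiplication of $\calF$, at the cost of a more computational argument.
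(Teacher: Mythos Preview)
Your proposal is correct and follows essentially the same approach as the paper, which records the identity $\Stab(X)(\IC_p)=\bigcap_{Q\in X}T_QX(\IC_p)$ and defers the scheme-theoretic verification (subgroup structure, positive dimension) to \cite[Lemmas~2.2,~2.3]{serban2020padic}; your steps~1--2 are precisely the content of that intersection identity, and the density input you flag is exactly the technical ingredient handled in the cited earlier work. One small wrinkle: in step~1 your surjective-endomorphism-of-a-Noetherian-ring argument needs $R[P]$ Noetherian, which holds for $P\in\calF(\overline{K})$ but not for general $P\in\calF(\IC_p)$; since the hypothesis concerns $\Stab(X)(\overline{K})$ and your construction of $G$ only uses finitely many $x_j\in X(\overline{K})$, simply restrict step~1 to $\overline{K}$-points and the rest goes through unchanged.
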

\begin{proof}
For the first part this is established as in \cite[Lemmas 2.2.,2.3]{serban2020padic}, with the only difference being that here we may not assume that $X$ passes through the origin. But since we have in this case that $\Stab(X)(\mathbb{C}_p)=\cap_{Q\in X}T_QX(\mathbb{C}_p)$ as sets, we obtain that $X$ contains a translate by $\gamma\in X(\mathbb{C}_p)$ of $\Stab(X)$.
\end{proof}

We are now ready to prove the first main result of this section: 
\begin{theorem}\label{thm:generalbackward}
Let $\calF$ denote an $n$-dimensional commutative $p$-divisible formal Lie group over $R$. Let $X\hookrightarrow\calF$ denote a closed affine formal subscheme and let $\Gamma\subset\calF(\overline{K})$ be a finitely generated $\IZ_p$-module, where $\IZ_p$ acts as an endomorphism via the inclusion $\IZ_p\subset \End(\calF)$. Let $\Sigma\subseteq \Gamma^{div}$ be any subset of the divisible hull of $\Gamma$. Then exactly one of the following occurs: 
\begin{enumerate}
    \item There exists a fixed positive integer $m$ such that any $\alpha\in \Sigma\cap X(\mathbb{C}_p)$ satisfies $[p^m](\alpha)\in \Gamma$. Moreover, if $\Sigma$ is discrete in the $p$-adic topology on the unit ball $\im_{\IC_p}^n$, then for $\varepsilon >0$ small enough the same conclusion applies for any $\alpha\in \Sigma$ with $d(\alpha, X)\leq \varepsilon$.
    \item The formal subscheme $X$ contains the translate of a formal subgroup of $\calF$ of positive dimension by an element of $\Sigma$ (over a finite extension of $R$).
\end{enumerate}

\end{theorem}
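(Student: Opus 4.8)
The plan is to prove the dichotomy in the sharp form \emph{if alternative (1) fails then alternative (2) holds}; since (1) either holds or fails this gives that at least one alternative occurs, and the two are genuinely exclusive because when (2) holds (say for $\Sigma=\Gamma^{div}$) the torsion of the subgroup translate already supplies infinitely many elements of $\Sigma\cap X(\IC_p)$ of unbounded order, so (1) fails. Before starting I would normalize: enlarging $R$ (allowed by the statement) I may assume $\Gamma\subseteq\calF(K)$, using that each $[a](\gamma_j)$ with $a\in\IZ_p$ already lies in the complete field $K(\gamma_1,\dots,\gamma_r)$. Then $\Gal(\overline K/K)$ fixes $\Gamma$ pointwise, so every $\Gamma^{-i}$ and hence $\Gamma^{div}$ is $\Gal(\overline K/K)$-stable and $\ord(\alpha):=\min\{i:[p^i]\alpha\in\Gamma\}$ is Galois-invariant; write $X=\Spf(R[[\mathbf X]]/I)$ with $I=(f_1,\dots,f_k)$ as in Lemma \ref{lemma:capepsilon}.

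Assume now that (1) fails, so $\sup\{\ord(\alpha):\alpha\in\Sigma\cap X(\IC_p)\}=\infty$, and pick pairwise distinct $\alpha_1,\alpha_2,\dots\in\Sigma\cap X(\IC_p)$ with $\ord(\alpha_k)\to\infty$. The key to keeping the eventual subgroup translate based at a point of the \emph{prescribed} set $\Sigma$ is to form the reduced Zariski closure $Y$ of $\{\alpha_k\}$ alone (not of its Galois orbit): $Y\subseteq X$ is a Noetherian, positive-dimensional closed formal subscheme, and on each of its irreducible components the $\alpha_k$ lying there are automatically dense in that component. I would pick a component $C$ containing infinitely many $\alpha_k$, and, after one further finite extension of $K$ (to the fixed field of the finite-index stabilizer of $C$ in $\Gal(\overline K/K)$), assume $C(\IC_p)$ is $\Gal(\overline K/K)$-stable. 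Relabelling, I have pairwise distinct $\alpha_k\in\Sigma$ on the irreducible, Galois-stable subscheme $C\subseteq X$, Zariski-dense in $C$, with $\ord(\alpha_k)\to\infty$.

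The heart of the argument is to show $\Stab(C)(\overline K)$ is infinite. Fix an integer $r>1$ and apply Lemma \ref{lemma:inertia}: there is $t_r$ so that every $P\in\bigcup_i\Gamma^{-i}\setminus\Gamma^{-t_r}$ is moved by some $s\in\Gal(K(\Gamma^{-r}))$ with $s(P)-P\in\calF[p^r]\setminus\calF[p^{r-1}]$. Since $\ord(\alpha_k)\to\infty$, all but finitely many $\alpha_k$ satisfy the hypothesis, and deleting finitely many points from a dense subset of the positive-dimensional irreducible $C$ keeps it dense, so the $\alpha_k$ with $\ord(\alpha_k)>t_r$ are still dense in $C$. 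For each such $k$ choose $s_k\in\Gal(K(\Gamma^{-r}))\subseteq\Gal(\overline K/K)$ and set $Q_k:=s_k(\alpha_k)-\alpha_k\in\calF[p^r]\setminus\calF[p^{r-1}]$; Galois-stability of $C(\IC_p)$ gives $\alpha_k+Q_k=s_k(\alpha_k)\in C(\IC_p)$, i.e. $\alpha_k\in(T_{Q_k}C)(\IC_p)$. As $\calF[p^r]\setminus\calF[p^{r-1}]$ is finite ($\calF$ being $p$-divisible), partition the dense set $\{\alpha_k:\ord(\alpha_k)>t_r\}$ by the value of $Q_k$; since $C$ is irreducible, one block $\{\alpha_k:Q_k=Q^{(r)}\}$ for a fixed $Q^{(r)}$ is dense in $C$, and as $T_{Q^{(r)}}C$ is closed (Lemma \ref{lemma:translate}) this forces $C\subseteq T_{Q^{(r)}}C$, equivalently $C+Q^{(r)}\subseteq C$; the left side is the image of $C$ under an automorphism of $\calF$, hence irreducible of the same dimension, so $C+Q^{(r)}=C$ and $Q^{(r)}\in\Stab(C)(\overline K)$. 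Letting $r$ run over all integers $>1$ yields stabilizer elements of orders $p^r$, so $\Stab(C)(\overline K)$ is infinite; by Lemma \ref{lemma:infstab}, $\Stab(C)$ is then a formal subgroup scheme over a finite extension of $R$ with a positive-dimensional component $G$, and $C\supseteq\gamma+G$ for every $\gamma\in C(\IC_p)$. Taking $\gamma=\alpha_1\in\Sigma$ gives $X\supseteq C\supseteq\alpha_1+G$, which is alternative (2).

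Finally, the ``moreover'' (a Tate--Voloch type strengthening) is obtained by running this argument while carrying along the ultrametric distance: assuming for contradiction that there are $\alpha_k\in\Sigma$ with $d(\alpha_k,X)\to0$ and $\ord(\alpha_k)\to\infty$, Lemma \ref{translateclose} gives $d(\alpha_k,T_{Q_k}X)=d(\alpha_k,X)\to0$, and Lemma \ref{lemma:capepsilon} (distance to an intersection is the maximum over generators) upgrades this to $d(\alpha_k,X\cap T_{Q_k}X)\to0$; after the pigeonhole the $\alpha_k$ thus approach a strictly smaller closed subscheme, and a descending-chain argument — terminating because the empty scheme has distance $1$ from every point — together with the discreteness of $\Sigma$ (which on the compact subsets one meets amounts to finiteness) yields the contradiction. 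The genuinely delicate part of the whole proof is the interplay of two requirements: insisting that the translate in (2) be based at a point of $\Sigma$ itself (not merely of $\Gamma^{div}$), which is what forces taking the closure of the $\Sigma$-points only and descending to an irreducible component; and the pigeonhole, which produces a \emph{single} torsion translate $Q^{(r)}$ stabilizing $C$ precisely because $C$ is irreducible. Making these compatible — and doing the analogous distance bookkeeping for the ``moreover'' — is where the work concentrates, with Lemmas \ref{lemma:inertia} and \ref{lemma:infstab} used as black boxes.
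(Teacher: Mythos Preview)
Your argument for the main dichotomy is correct and takes a genuinely different route from the paper's. The paper proceeds by induction on $\dim(X)$: if $\Stab(X)(\overline K)$ is infinite one is done by Lemma~\ref{lemma:infstab}, and otherwise one fixes a \emph{single} $r$ with $\Stab(X)(\overline K)\cap\calF[p^\infty]\subset\calF[p^{r-1}]$, uses Lemma~\ref{lemma:inertia} to cover $\Sigma\cap X$ (and simultaneously its $\varepsilon$-thickening $X(\varepsilon)$) by $\Gamma^{-(t-1)}$ together with the finitely many intersections $X\cap T_QX$ for $Q\in\calF[p^r]\setminus\calF[p^{r-1}]$, each of strictly smaller dimension since $Q\notin\Stab(X)$, and then inducts. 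By contrast you avoid induction for the main claim: you pass to an irreducible component $C\subseteq X$ of the Zariski closure of the offending $\Sigma$-points and show $\Stab(C)(\overline K)$ is infinite \emph{directly}, by letting $r$ range over all integers $>1$ and pigeonholing the Galois differences to produce a stabilizer element $Q^{(r)}$ of exact order $p^r$ for each $r$. This is a clean route which makes it transparent that the base point of the subgroup translate lies in $\Sigma$ (namely any $\alpha_k\in C$), something the paper's induction also delivers but less visibly. The trade-off is that the paper's induction handles the Tate--Voloch ``moreover'' in the same breath, whereas your sketch for it reverts to a descending-chain argument that is essentially the paper's induction anyway---and there you should make explicit, as the paper does, that at each stage one chooses $r$ large enough that $\calF[p^r]\setminus\calF[p^{r-1}]$ avoids the (finite) torsion of the stabilizer of the current scheme, so that $X\cap T_QX$ is genuinely of smaller dimension and the chain terminates.
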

We remark that in particular taking $\Gamma$ to be the trivial group yields that there are only finitely many torsion points approaching a closed formal subscheme unless it contains a translate of a formal subgroup by a torsion point. Furthermore, taking $\Sigma$ to be the backward orbit of finitely many $\gamma\in\im_{\IC_p}^n$ under multiplication-by-$p$, we get a Manin--Mumford and Tate--Voloch type result for the more general set $\Sigma$ of special points, as now $\Sigma$ is discrete in the $p$-adic topology (however the full $\Gamma^{div}$ is in general not discrete). 

\begin{proof}
We may as well assume that $X$ is irreducible and proceed by induction on $\dim(X)$. If $\Stab(X)(\overline{K})$ is infinite, we conclude the result holds by Lemma \ref{lemma:infstab}. Under the assumption that $\Stab(X)(\overline{K})$ is finite, we may pick $r>1$ large enough so as to ensure that $\Stab(X)(\overline{K})\cap \calF[p^\infty]\subset \calF[p^{r-1}]$. By Lemma \ref{lemma:inertia} we may also fix $t\geq r$ large enough so that for any point $P\in \bigcup_{i=t}^\infty \Gamma^{-i}\setminus \Gamma^{-t}$ there exists an automorphism $s\in \Gal(L)$ such that $s(P)-P\in \calF[p^r]\setminus \calF[p^{r-1}]$. For any $\varepsilon>0$, we abbreviate $X(\varepsilon)=\{\zeta\in \Sigma\setminus\Gamma\vert d(\zeta,X)\leq \varepsilon\}$. Then by Lemma \ref{translateclose}, we may for all $\varepsilon>0$ write 
$$X(\varepsilon)\subseteq \Gamma^{-(t-1)} \cup\bigcup_{Q\in \calF[p^{r}]\setminus \calF[p^{r-1}]} X(\varepsilon)\cap T_{Q}X(\varepsilon)$$
as well as $X\cap\Gamma^{div}\subseteq \Gamma^{-(t-1)} \cup\bigcup_{Q\in \calF[p^{r}]\setminus \calF[p^{r-1}]} X\cap T_{Q}X\cap\Gamma^{div}$. Furthermore, for our ultrametric distance we get the slightly counter-intuitive consequence of Lemma \ref{lemma:capepsilon} that we may rewrite $X(\varepsilon)\cap T_{Q}X(\varepsilon)=X\cap T_{Q}X(\varepsilon)$. Since $\calF[p^{r}]\setminus \calF[p^{r-1}]$ is a finite set, it therefore suffices to prove the result for each of the formal subschemes $X\cap T_QX$. Since $Q$ is by construction not on the stabilizer, $X\cap T_QX$ is a closed formal subscheme of lower dimension defined over a finite extension of $R$. The result now follows by induction and the stronger statement holds as well by induction provided $\Sigma$ is discrete in the $p$-adic topology on the $n$-dimensional formal disk. 
\end{proof}
We may deduce a stronger result when we are able to take quotients by positive dimensional stabilizers and induct. For the formal torus $\widehat{\mathbb{G}}_m^n$, we know enough about the structure of formal subgroups to carry this through: 
\begin{lemma}\label{lemma:toruschars}
Let $X^*(\widehat{\IG}_m^n):=\Hom_{frm.grp}(\widehat{\IG}_m^n,\widehat{\IG}_m)\cong \IZ_p^n$ denote the (formal) characters on the formal torus. Then any (closed, affine) formal subtorus is the vanishing locus of a $\mathbb{Z}_p$-submodule $M$ of $X^*(\widehat{\IG}_m^n)\cong\IZ_p^n$. 
\end{lemma}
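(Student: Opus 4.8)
The plan is to reduce the statement to a computation in the character lattice. Write $X^{*}(\widehat{\IG}_m^n)\cong\IZ_p^n$, under which $\mathbf a=(a_1,\dots,a_n)\in\IZ_p^n$ corresponds to the formal character $\chi_{\mathbf a}:=\prod_{i=1}^{n}(1+X_i)^{a_i}-1\in R[[X_1,\dots,X_n]]$. Let $H\hookrightarrow\widehat{\IG}_m^n$ be a closed formal subtorus, so that $H\cong\widehat{\IG}_m^d$ over $R$ for some $0\le d\le n$, and let $\mathrm{res}\colon X^{*}(\widehat{\IG}_m^n)\to X^{*}(H)$, $\chi\mapsto\chi|_H$, be the restriction homomorphism. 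Setting $M:=\ker(\mathrm{res})$, a $\IZ_p$-submodule of $X^{*}(\widehat{\IG}_m^n)$, I would prove that $H$ equals the common vanishing locus $V(M)$ of $\{\chi_{\mathbf a}:\mathbf a\in M\}$. The inclusion $H\subseteq V(M)$ is built into the definition of $M$, so the work is in the reverse inclusion.

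First I would pin down the $\IZ_p$-module structure of $M$. Since $X^{*}(\widehat{\IG}_m^n)/M$ embeds into $X^{*}(H)\cong\IZ_p^d$, it is torsion-free, so $M$ is saturated in $\IZ_p^n$ and hence a direct summand. To see that $M$ has rank $n-d$ I would bring in the cocharacter lattices $X_{*}(-):=\Hom_{\mathrm{frm.grp}}(\widehat{\IG}_m,-)$: the closed immersion $H\hookrightarrow\widehat{\IG}_m^n$ induces an injection $X_{*}(H)\cong\IZ_p^d\hookrightarrow X_{*}(\widehat{\IG}_m^n)\cong\IZ_p^n$, and under the canonical pairings $X^{*}(G)\times X_{*}(G)\to\End(\widehat{\IG}_m)=\IZ_p$, $(\chi,\lambda)\mapsto\chi\circ\lambda$ — which are perfect for $G=\widehat{\IG}_m^n$ and, since $H$ is itself a formal torus, for $G=H$ — the map $\mathrm{res}$ is the transpose of this injection. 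The transpose of an injection of free $\IZ_p$-modules has image of full rank, so $\mathrm{res}$ has image of rank $d$ and $M$ has rank $n-d$.

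Next I would put $M$ in standard position. As $M$ is a rank-$(n-d)$ direct summand of $\IZ_p^n$, there is $g\in\GL_n(\IZ_p)$ with $g\cdot M=\IZ_p e_1\oplus\dots\oplus\IZ_p e_{n-d}$; the element $g$ comes from a formal group automorphism of $\widehat{\IG}_m^n$ over $R$, under which the pair $(H,M)$ and the locus $V(M)$ transform equivariantly, so the identity $H=V(M)$ is insensitive to this change of coordinates and we may assume $M=\IZ_p e_1\oplus\dots\oplus\IZ_p e_{n-d}$. Then $\chi_{e_i}=X_i$ for $i\le n-d$, and every $\chi_{\mathbf a}$ with $\mathbf a\in M$ lies in the ideal $(X_1,\dots,X_{n-d})$; hence that ideal is precisely the one generated by $\{\chi_{\mathbf a}:\mathbf a\in M\}$ and $V(M)=\Spf R[[X_{n-d+1},\dots,X_n]]$, a closed formal subgroup isomorphic to $\widehat{\IG}_m^d$. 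Finally, $H\subseteq V(M)$ is a closed immersion of formal groups over $R$ that are both isomorphic to $\widehat{\IG}_m^d$, so it corresponds to a surjection $R[[Z_1,\dots,Z_d]]\onto R[[W_1,\dots,W_d]]$; since $R$ is a complete discrete valuation ring, $R[[Z_1,\dots,Z_d]]$ is a Noetherian domain of the same Krull dimension as its quotient, which forces the kernel to be contained in a minimal prime, hence to vanish. Therefore $H=V(M)$.

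The step I expect to require the most care is the rank-and-saturation claim: one has to check that the character--cocharacter pairing is compatible with restriction to $H$, and to use genuinely that $H$ is itself a formal torus in order to identify $X^{*}(H)$ and $X_{*}(H)$ with free $\IZ_p$-modules in perfect duality; by comparison the $\GL_n(\IZ_p)$-normalization and the concluding dimension count are routine. It is also worth flagging that taking $M$ to be the saturated module $\ker(\mathrm{res})$ is essential — for a non-saturated submodule of characters the locus $V(M)$ picks up extra finite (torsion) components such as copies of $\mu_{p^k}$ and fails to be connected, so it coincides with $H$ only after passing to the saturation.
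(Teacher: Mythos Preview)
Your argument is correct and takes a genuinely different route from the paper's. The paper proceeds by induction on $n$: it shows that any proper formal subtorus $G\hookrightarrow\widehat{\IG}_m^n$ admits at least one nontrivial vanishing character by projecting onto $\widehat{\IG}_m^{n-1}$, analyzing whether $\pi|_G$ is surjective, and (after replacing $G$ by $[p^m]G$ to kill a finite kernel) reading off the character from the inverse of $\pi|_G$; the full module $M$ is then built up one character at a time via the induction. You instead identify $M$ in one stroke as $\ker(\mathrm{res})$, and the character--cocharacter duality for formal tori gives you both the rank and the saturation of $M$ without any induction; the $\GL_n(\IZ_p)$ normalization and the Krull-dimension comparison of the two copies of $R[[Z_1,\dots,Z_d]]$ then finish cleanly. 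Your approach has the advantage of naming $M$ intrinsically and sidestepping the paper's slightly delicate isogeny replacement; the paper's approach is more elementary in that it never invokes the perfect pairing $X^*(H)\times X_*(H)\to\IZ_p$ and stays closer to explicit coordinate manipulations.
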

\begin{proof}
This can be deduced from the Manin--Mumford type result for the formal torus, but we give a direct proof. We proceed by induction on $n$, the result holding for $n=1$. It also by induction suffices to show that if a subtorus $G\hookrightarrow \widehat{\IG}_m^n $ is proper, there exists some character $\chi\in X^*(\widehat{\IG}_m^n)$ vanishing on $G$. To prove the claim, we consider the projection morphism $\pi:\widehat{\IG}_m^n\to \widehat{\IG}_m^{n-1}$. If $\pi\vert_G$ is not surjective, we conclude by induction applied to $\pi(G)\hookrightarrow \widehat{\IG}_m^{n-1}$. We therefore assume $\pi\vert_G$ is a surjective morphism of formal groups. If $\ker(\pi)$ was infinite, then by a Zariski-closure argument the closed formal subgroup $G$ would not be proper. Hence $\ker(\pi\vert_G)$ is finite and we may assume it trivial after replacing $G$ by $[p^m]G$ for some $m$. It suffices to prove the lemma for $[p^m]G$ and we may therefore assume $\pi\vert_G$ is an isomorphism. But then we must have $(\pi\vert_G)^{-1}(X_1,\ldots,X_{n-1})=(X_1,\ldots,X_{n-1}, \phi(X_1,\ldots,X_{n-1}))$ for some $\phi\in X^*(\widehat{\IG}_m^{n-1}))$ so that $\phi(X_1,\ldots,X_{n-1})=X_n$ on $G$. 
\end{proof}
One then has the stronger result for formal tori: 
\begin{theorem}\label{thm:backwardtorus}
Let $X\hookrightarrow\widehat{\mathbb{G}}_m^n$ denote a closed affine formal subscheme over a complete, discretely valued subring $R$ of $\calO_{\IC_p}$. Let $\Gamma\subset\widehat{\IG}_m^n(\overline{K})$ be a finitely generated $\IZ_p=\End(\widehat{\IG}_m)$-module and let $\Sigma\subseteq \Gamma^{div}$ be any subset of the divisible hull. There exists a finite union of translates $\calT_i$ of positive dimensional formal subtori by elements in $\Sigma$ and a fixed $m\in \IN$ such that: 
$$X\cap(\Sigma\setminus\Gamma^{-m})=\bigcup_{i=1}^f \calT_i\cap (\Sigma\setminus\Gamma^{-m}).$$
\end{theorem}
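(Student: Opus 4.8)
The plan is to bootstrap Theorem~\ref{thm:backwardtorus} from Theorem~\ref{thm:generalbackward} by an induction on $n$, using Lemma~\ref{lemma:toruschars} to control the positive-dimensional stabilizers that arise. First I would reduce to the case where $X$ is irreducible, and split into two cases according to whether $\Stab(X)(\overline{K})$ is finite or infinite. If it is finite, Theorem~\ref{thm:generalbackward} already gives a fixed $m$ with $X\cap(\Sigma\setminus\Gamma^{-m})\subseteq\Gamma$; but intersecting with $\Sigma\setminus\Gamma^{-m}$, and noting $\Gamma=\Gamma^{-0}\subseteq\Gamma^{-m}$, the left-hand side is empty and we may take the empty union of tori (or, if we prefer a nonempty statement, enlarge $m$ so that the set is visibly empty). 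So the substance is the infinite-stabilizer case.

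When $\Stab(X)(\overline{K})$ is infinite, Lemma~\ref{lemma:infstab} produces a positive-dimensional formal subgroup $H\subseteq\calF=\widehat{\IG}_m^n$ defined over a finite extension, and Lemma~\ref{lemma:toruschars} identifies $H$ as the vanishing locus of a $\IZ_p$-submodule $M\subseteq X^*(\widehat{\IG}_m^n)$, hence realizes the quotient $\widehat{\IG}_m^n/H$ as a formal torus $\widehat{\IG}_m^{n'}$ of dimension $n'<n$ via the characters in a complement of $M$. Let $q\colon\widehat{\IG}_m^n\onto\widehat{\IG}_m^{n'}$ be the quotient isogeny-type map; since $X$ is $H$-stable (a translate $\gamma+H\subseteq X$ for $\gamma\in X$, and more precisely $X$ is a union of $H$-cosets once we pass to the reduced, stabilizer-saturated subscheme), the image $q(X)$ is a closed formal subscheme of $\widehat{\IG}_m^{n'}$ with $X = q^{-1}(q(X))$. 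The image $q(\Gamma)$ is again a finitely generated $\IZ_p$-module and $q(\Sigma)\subseteq q(\Gamma)^{div}$, because $q$ commutes with $[p^i]$. One then applies the inductive hypothesis of Theorem~\ref{thm:backwardtorus} to $q(X)\hookrightarrow\widehat{\IG}_m^{n'}$: there is a fixed $m'$ and finitely many translates $\calT_j'$ of positive-dimensional subtori of $\widehat{\IG}_m^{n'}$ by elements of $q(\Sigma)$ with $q(X)\cap(q(\Sigma)\setminus q(\Gamma)^{-m'})=\bigcup_j \calT_j'\cap(q(\Sigma)\setminus q(\Gamma)^{-m'})$. Pulling back, $q^{-1}(\calT_j')$ is a translate of a positive-dimensional formal subtorus of $\widehat{\IG}_m^n$ (it contains $H$), and one has to check that the translating element can be taken in $\Sigma$ and that one can choose a uniform $m$ (depending on $m'$ and the ramification introduced by $q$, controlled exactly as in Lemma~\ref{lemma:inertia}) so that $X\cap(\Sigma\setminus\Gamma^{-m})=\bigcup_j q^{-1}(\calT_j')\cap(\Sigma\setminus\Gamma^{-m})$.

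The one genuinely delicate point is the bookkeeping of backward orbits under the quotient map: a point $P\in\Sigma$ with $q(P)\in q(\Gamma)^{-m'}$ need not itself lie in $\Gamma^{-m'}$, since $q$ has a nontrivial (finite, but nonzero) kernel on torsion, and conversely the fibres $q^{-1}(q(P))$ meet $\im_{\IC_p}^n$ in points defined over possibly larger ramified extensions. I would handle this by the same Newton-polygon/ramification argument used in the proof of Lemma~\ref{lemma:inertia}: since $H$ is defined over a fixed finite extension and $q$ is a fixed morphism of formal groups, there is a constant $c$ with $\Gamma^{-i}\subseteq q^{-1}(q(\Gamma)^{-i})\subseteq\Gamma^{-(i+c)}$ for all $i$, which lets one convert the level $m'$ on the quotient into a level $m=m'+c$ upstairs. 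The other mild subtlety is that the $\calT_j'$ are translates by elements of $q(\Sigma)$ rather than $\Sigma$; but any preimage in $\Sigma$ of such an element works after adjusting by an element of $H$, and since $q^{-1}(\calT_j')\supseteq H$ the ambiguity is absorbed. Assembling finitely many $q^{-1}(\calT_j')$ over the finitely many irreducible components of $X$ gives the finite union $\bigcup_{i=1}^f\calT_i$ and a single uniform $m$, completing the induction. I expect the ramification estimate $\Gamma^{-i}\subseteq q^{-1}(q(\Gamma)^{-i})\subseteq\Gamma^{-(i+c)}$ to be the main obstacle to write cleanly, as it requires comparing Newton polygons of $[p^i]$ on $\widehat{\IG}_m^n$ and on $\widehat{\IG}_m^{n'}$ through the possibly inseparable-on-special-fibre map $q$.
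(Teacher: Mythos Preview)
Your central claim $q^{-1}(q(\Gamma)^{-i})\subseteq\Gamma^{-(i+c)}$ fails whenever $\dim H>0$. Take $\Gamma$ torsion-free and let $\zeta\in H[p^N]\subseteq\ker q$ have exact order $p^N$: then $q(\zeta)=0\in q(\Gamma)$, so $\zeta\in q^{-1}(q(\Gamma)^{-0})$, yet $[p^c]\zeta\neq 0$ for $N>c$, so $\zeta\notin\Gamma^{-c}$. No Newton-polygon or ramification bound can repair this: contrary to what you write, $q$ does \emph{not} have finite kernel on torsion---$\ker q=H$ is a positive-dimensional subtorus---and hence $q^{-1}(q(\Gamma)^{-i})=\{P:[p^i]P\in\Gamma+H(\IC_p)\}$ is vastly larger than any $\Gamma^{-m}$. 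Consequently a point $P\in\Sigma\cap X$ with $q(P)\in q(\Gamma)^{-m'}$ need not lie in $\Gamma^{-m}$ for any uniform $m$, and your pulled-back tori $q^{-1}(\calT_j')$ are not guaranteed to cover it either, since the inductive conclusion downstairs says nothing about such $q(P)$. (There is also a smaller slip in your finite-stabilizer case: $\Stab(X)(\overline K)$ finite does not by itself force alternative~(1) of Theorem~\ref{thm:generalbackward}; an irreducible $X$ can contain a translate $\gamma+H'$ of a positive-dimensional subgroup without $H'$ stabilising all of $X$.)

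The paper avoids this bookkeeping by reorganising the induction. It first reduces to the assertion that if $X$ is irreducible through the origin and $X\cap(\Sigma\setminus\Gamma^{-m})$ is Zariski-dense in $X$ for every $m$, then $X$ is itself a formal subtorus. Under that standing density hypothesis Theorem~\ref{thm:generalbackward} supplies some subtorus $H\subseteq X$ of dimension $k\geq 1$ (not taken to be $\Stab(X)$); projecting along $H$ to $\widehat{\IG}_m^{n-k}$, the induction on $n$ is used only qualitatively, to decide whether the projected picture contains a positive-dimensional subtorus translate. If not, $X$ lies in finitely many hyperplanes and one inducts on $n$; if so, one exhibits a strictly larger subtorus $H'\subseteq X$ and reruns the argument with $k$ increased. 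This ``grow $H$ until $X=H$'' loop never requires comparing the filtrations $\Gamma^{-m}$ upstairs and $q(\Gamma)^{-m'}$ downstairs.
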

\begin{proof}
It suffices to prove that if $X$ is irreducible passing through the origin and that $\Sigma\setminus \Gamma^{-m}$ is dense for some fixed $m$, then $X$ is a formal subtorus. We proceed by induction on $n$, the result holding for $n=1$ by Weierstrass preparation. By Theorem \ref{thm:generalbackward}, we know $X$ contains a formal subtorus $H$ of dimension $k\geq 1$. 
We may as well assume that its annihilator characters $M_H\subset X^*(\widehat{\IG}_m^n)=\IZ_p^n$ as in Lemma \ref{lemma:toruschars} form a free $\mathbb{Z}_p$-module of rank $n-k$. Moreover, after applying a $p$-power isogeny to $X$ we may as well assume the quotient $\IZ_p^n/M_H$ is torsion-free and that we arrive at a split exact sequence
$$0\to\IZ_p^k\to \IZ_p^n\to M_H\to 0.$$
We get corresponding maps of formal tori and in particular denote by
$$\pi_k:\widehat{\IG}_m^n\to \widehat{\IG}_m^{n-k}$$ 
the projection corresponding to the section $M_H\to\IZ_p^n$. By induction, there is some $m$ such that the Zariski closure of $\pi_k(X\cap (\Sigma\setminus \Gamma^{-m}))$ is a finite union of translates of formal subtori $\calT_i$. If none of them is positive dimensional, $X$ is contained in a finite union of hyperplanes and we conclude by induction. If however one of them is positive dimensional, then $X$ contains a formal torus translate $H'$ of strictly greater dimension than $k$. The result follows, since we may replace $H$ by $H'$ and rerun the same argument, increasing $k$ until $\pi_k(X)$ is finite.

\end{proof}
Such a stronger result can of course be obtained for more general formal groups $\calF$ when one is able to run a similar inductive argument.

\subsection{Forward orbits: a counterexample to $p$-adic formal Mordell--Lang}


We have essentially reduced the problem of studying the Zariski-closures of subsets of the divisible hull of $\Gamma$ to just the study of closures of subsets of $\Gamma$. It is natural to ask whether a version of the Mordell-Lang Conjecture holds in this setting. Namely, stating the question for formal tori: 
\begin{question}
Let $\Gamma<\widehat{\mathbb{G}}_m^n(\mathbb{C}_p)$ denote an arbitrary finitely generated subgroup and let $\Gamma^{div}$ denote the divisible hull of $\Gamma$. Is any closed formal subscheme $X\subset \Spf(R[[X_1,\ldots,X_n]])$ with $X\cap \Gamma^{div}$ Zariski-dense a finite union of translates of subtori by elements in $\Gamma^{div}$ ?
\end{question}
Proofs in the classical setting rely heavily on Diophantine approximation results such as the Thue--Siegel--Roth theorem (see e.g., \cite{AST_1975Liardet}) and on the algebraicity of the functions involved. For instance Laurent's proof (\cite{MR767195}) relies heavily on the finite support of the coefficients of polynomials defining the algebraic subvariety. We proceed in this section to answer this question in the \textbf{negative} by exhibiting counter-examples when the underlying functions are not algebraic but merely analytic. This is in contrast to the results for backward iterates under non-invertible endomorphisms of formal groups of Section \ref{subsec:backward}. \par

Recall that $[a](\mathbf{X})$ denotes the endomorphism of the ambient formal group corresponding to $a\in \IZ_p$. We shall focus on a special case where  our $n$-dimensional formal group is a power $\calG^n$ of a one-dimensional formal group of finite height over $R$ such as the formal multiplicative group.
It is then natural to consider $$\Gamma_{\IZ_p}:=\{[k_1](\pi_1),\ldots,[k_n](\pi_n)\vert k_i\in \mathbb{Z}_p\}\subset \calG^n(\overline{K}),$$
acting via endomorphisms $\IZ_p\hookrightarrow \End(\calG)$ in each coordinate and for fixed $\pi_i\in \im_{\overline{K}}$. However, it is too ambitious to ask that an irreducible component of the (formal) Zariski-closure of a subset $\mathcal{S}\subset\Gamma$ be the translate of a (closed, affine) formal subgroup of $\calG^n$. Indeed, as was pointed out to us by Congling Qiu, if the $\pi_i$ are not preperiodic points for the endomorphisms of $\calG$, this set $\Gamma$ is simply too big. For instance for $\widehat{\IG}_m^2/\IZ_p$ and $\pi_1=\pi_2=p$ we have that $(1+p)^{\IZ_p}-1\cong p\IZ_p$ and therefore polynomials such as $\phi(X,Y)=X-Y-p$ already vanish at infinitely many points in $\Gamma$.  \par
It therefore seems more reasonable to restrict to forward orbits via iterations of the formal group law and consider: 
 $$\Gamma:=\{[k_1](\pi_1),\ldots,[k_n](\pi_n)\vert k_i\in \mathbb{Z}\}\subset \calG^n(\overline{K}).$$
 Since points in $\Gamma$ can now be viewed as forward iterates of $\pi_i$ under power series giving multiplication by some integer in the formal group $\calG$, the truthfulness of a Mordell-Lang type result for such $\Gamma$ should be connected to dynamical Mordell--Lang type results (see e.g., \cite{GhiocaBellTuckerdynamicalMLbook}). Indeed we obtain the following result inspired by a counter-example in dynamics \cite[Prop. 7.1.]{BenedettoGhiocaGapPrinciple} and which in particular shows that no $p$-adic Mordell--Lang type result for forward orbits is possible. 
\begin{prop}\label{prop:counterexample}
 Let $\calF^2$ denote two copies of a one-dimensional $p$-divisible formal group over $\IZ_p$ of finite height $h$. Let $\alpha_1,\alpha_2\in p\IZ_p$ and suppose for simplicity $v_p(\alpha_1)=v_p(\alpha_2)$. Let $[n]$ denote the multiplication-by-$n$ endomorphism in the formal group. Then there exists $\phi\in \IZ_p[[X,Y]]$ such that for an infinite set $S$ of pairs $(n,m)\in\mathbb{N}^2$ one has:
$$\phi([n](\alpha_1),[m](\alpha_2))=0$$
and the set $S$ cannot be covered by any finite union of lines, implying that the zeroes of $\phi$ do not come from a formal subgroup translate. 
 \end{prop}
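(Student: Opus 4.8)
The plan is to construct $\phi$ explicitly as an interpolation series adapted to a well-chosen infinite arithmetic-type set $S$, exploiting the fact that $[n](\alpha_i)$, as a function of $n\in\IN$ (and indeed of $n\in\IZ_p$), extends to a rigid-analytic function of $n$ with controlled growth. Concretely, since $\calF$ has finite height $h$ and $\alpha_i\in p\IZ_p$, the logarithm of $\calF$ converges on the open unit disk and one has $[n](\alpha_i)=\log_\calF^{-1}(n\cdot\log_\calF(\alpha_i))$, so that $n\mapsto [n](\alpha_i)$ is a power series in $n$ with coefficients tending to zero $p$-adically; write $u_i:=\log_\calF(\alpha_i)$, so $v_p(u_1)=v_p(u_2)$ by the hypothesis $v_p(\alpha_1)=v_p(\alpha_2)$ (the logarithm preserves valuation to leading order on $p\IZ_p$). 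The first step is therefore to reduce, via the analytic isomorphism $\log_\calF$, to producing a single nonzero $\psi\in\IZ_p[[u,v]]$ (after rescaling, in $\IZ_p[[U,V]]$) vanishing at infinitely many pairs $(n u_1, m u_2)$ with $(n,m)$ ranging over a set $S$ not contained in finitely many lines; pulling back through $\log_\calF^{-1}$ in each coordinate and clearing denominators then yields $\phi\in\IZ_p[[X,Y]]$ with the required vanishing.

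Next I would choose $S$ and $\psi$ following the Benedetto--Ghioca--Tucker gap-principle idea \cite[Prop.~7.1.]{BenedettoGhiocaGapPrinciple}: take $S=\{(n_k,m_k)\}_{k\ge 1}$ to be a very lacunary sequence, e.g.\ with $n_k,m_k$ growing doubly-exponentially and chosen so that the points $P_k:=(n_k u_1, m_k u_2)\in(p\IZ_p)^2$ satisfy $v_p(P_{k+1})\gg v_p(P_k)$ in a suitable comparative sense, so that $\{P_k\}$ accumulates fast at the origin. One then builds $\psi$ as a convergent infinite product $\psi(U,V)=\prod_{k\ge 1}\bigl(1-\ell_k(U,V)/\ell_k(P_k)\bigr)$ (or an additive analogue) where $\ell_k$ is a linear form vanishing at $P_1,\dots,P_{k-1}$ — or, more simply, as a Weierstrass-type series whose Newton polygon forces a zero near each $P_k$. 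The key point is that because $S$ is lacunary the correction needed at each stage is $p$-adically tiny, so the product/series converges coefficientwise in $\IZ_p[[U,V]]$; and because we may arrange the $P_k$ to have, say, strictly increasing and eventually all-distinct ratios $n_k/m_k$ (or to lie on distinct lines through the origin), $S$ is visibly not covered by any finite union of lines. Finally one checks that $\psi$ is not identically zero — this follows from tracking the lowest-degree term through the construction, which survives because each factor contributes $1+O(\text{small})$.

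The main obstacle I anticipate is the convergence-versus-nonvanishing tension: one must choose the growth rate of $(n_k,m_k)$ aggressively enough that the successive analytic corrections decay $p$-adically (ensuring $\phi$ lies in $\IZ_p[[X,Y]]$ and not merely in some larger ring of functions on a smaller disk), yet still control the construction well enough to guarantee $\phi\not\equiv 0$ and that $S$ genuinely avoids all finite unions of lines. Making these three requirements simultaneously satisfiable is exactly where the finite-height hypothesis and the equal-valuation normalization $v_p(\alpha_1)=v_p(\alpha_2)$ are used: finite height makes $\log_\calF$ and its inverse analytic on the full open unit disk with integral coefficients after scaling, and the equal valuations let the two coordinates be treated symmetrically so that a single lacunarity rate works for both. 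A secondary technical point is verifying that pulling back from the $(U,V)$-coordinates to the $(X,Y)$-coordinates via $\log_\calF^{-1}$ does not destroy integrality of coefficients; this is handled by replacing $\phi$ by $p^N\phi$ for suitable $N$ if needed, which does not affect the zero set. I expect the remaining verifications — that $[n](\alpha_i)=\log_\calF^{-1}(n u_i)$, the Newton-polygon estimate locating a zero of $\psi$ near each $P_k$, and the lacunarity bookkeeping — to be routine once the sequence $(n_k,m_k)$ is pinned down.
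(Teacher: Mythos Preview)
Your high-level strategy---lacunary interpolation inspired by the Benedetto--Ghioca--Tucker construction---is the right one, and matches the paper's approach in spirit. But there is a genuine gap in your reduction step. You propose to pass to additive coordinates via $\log_\calF$, build $\psi\in\IZ_p[[U,V]]$ there, and then pull back through $U=\log_\calF(X)$, $V=\log_\calF(Y)$, claiming that any loss of integrality can be repaired by multiplying by a fixed $p^N$. This fails: the coefficients of $\log_\calF$ have \emph{unbounded} denominators (already for $\widehat{\IG}_m$ one has $\log(1+X)=\sum(-1)^{n+1}X^n/n$, so $v_p$ of the $X^{p^k}$-coefficient is $-k$), and hence $\psi(\log_\calF X,\log_\calF Y)$ will in general not lie in $p^{-N}\IZ_p[[X,Y]]$ for any finite $N$. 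Finite height guarantees convergence of $\log_\calF$ on the open disk, not integrality of its coefficients. A secondary issue: your product formula with ``$\ell_k$ a linear form vanishing at $P_1,\dots,P_{k-1}$'' is ill-posed, since a single linear form in two variables cannot pass through several points in general position.

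The paper sidesteps both problems by carrying out the interpolation \emph{directly in the $X$-coordinate}, never substituting $\log_\calF$ into a power series. It builds a one-variable $\phi\in\IZ_p[[X]]$ via Newton-type interpolation: set $\psi_j(X)=\prod_{i\le j}(X-[n_i](\alpha_1))$ and $\phi_{j+1}=\phi_j+c_j\psi_j$, choosing $c_j\in\IZ_p$ so that $\phi_{j+1}([n_{j+1}](\alpha_1))=[m_{j+1}](\alpha_2)$. With $n_j=p^{2^j-1}$ the corrections $c_j\psi_j$ decay coefficientwise, so $\phi_j\to\phi\in\IZ_p[[X]]$, and the two-variable series in the statement is simply $Y-\phi(X)$. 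The logarithm enters only at one arithmetic step: to show that the target value $\phi_j(0)$ is of the form $[A_j](\alpha_2)$ for some $A_j\in\IZ_p$, which is then approximated by an integer $m_{j+1}$; here one evaluates $\log_\calF$ at a \emph{point} rather than composing it into the power series, so no denominator issue arises. The freedom in choosing $m_{j+1}$ is also what lets one force the slopes $(m_{j+1}-m_j)/(n_{j+1}-n_j)$ to be pairwise distinct, ruling out coverage by finitely many lines. Your plan becomes correct if you drop the logarithm pullback and do the interpolation directly in $\IZ_p[[X]]$ as above.
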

 \begin{proof}
We shall first construct a sequence of functions $\phi_j\in \IZ_p[X]$, together with pairs of indices $(n_j,m_j)$, such that
$$\phi_j([n_i](\alpha_1))=[m_i](\alpha_2)$$
for all $1\leq i\leq j$. We proceed by induction, and assume $\phi_1,\ldots,\phi_j$ constructed as well as $(n_1,m_1),\ldots,(n_j,m_j)$ chosen. A suitable choice for $(n_1,m_1)$ will be specified later. We now set: 
\begin{align*}
    \phi_{j+1}(X)&:=\phi_j(X)+c_j\psi_j(X)\\
    \psi_j(X)&:=\prod_{i=1}^j(X-[n_i](\alpha_1)).
\end{align*}
In order to achieve $\phi_{j+1}([n_{j+1}](\alpha_1))=[m_{j+1}](\alpha_2)$, it therefore suffices to set the correction factor $c_j$ to 
$$c_j=\frac{[m_{j+1}](\alpha_2)-\phi_j([n_{j+1}](\alpha_1))}{\psi_j([n_{j+1}](\alpha_1))}.$$
A priori we have that $c_j\in \IQ_p$ but we proceed to show that the integers $(n_j,m_j)$ can be chosen so that $c_j\in \IZ_p$. Assume now that $(n_j,m_j)$ are chosen so that the following bounds hold: 
\begin{equation}\label{abscond1}
    \vert [n_{j+1}](\alpha_1)\vert_p<\prod_{i=1}^j\vert [n_i](\alpha_1)\vert_p
\end{equation}
\begin{equation}\label{abscond2}
    \vert [m_{j+1}](\alpha_2)-\phi_j(0)\vert_p\leq \prod_{i=1}^j\vert [n_i](\alpha_1)\vert_p.
\end{equation}
We then have that $\vert \psi_j([n_{j+1}](\alpha_1)) \vert_p=\prod_{i=1}^j\vert [n_i](\alpha_1)\vert_p$ by the properties of the absolute value. Moreover, we may bound: 
\begin{align*}
   \vert [m_{j+1}](\alpha_2)-\phi_j([n_{j+1}](\alpha_1))\vert_p&\leq \max\{\vert \phi_j(0)- \phi_j([n_{j+1}](\alpha_1))\vert_p, \vert \phi_j(0)- [m_{j+1}](\alpha_2)\vert_p\}\\
   &\leq \max\{\vert [n_{j+1}](\alpha_1)\vert_p,\vert \phi_j(0)- [m_{j+1}](\alpha_2)\vert_p\}\\
     &\leq \prod_{i=1}^j\vert [n_i](\alpha_1)\vert_p.
\end{align*}
It therefore follows that $c_j\in \IZ_p$ and $\phi_{j+1}\in \IZ_p[X]$. Furthermore, we see that given any $m\geq 0$, the coefficient of $X^m$ in $c_j\psi_j(X)$ approaches zero as $j\to\infty$. Thus the sequence $\phi_j$ converges coefficient-wise and uniformly on the open ball to some $\phi\in \IZ_p[[X]]$. We therefore have the desired property $\forall j\in\IN$:
$$\phi([n_j](\alpha_1))=[m_j](\alpha_2).$$
It remains to check that we may arrange for the inequalities \eqref{abscond1} and \eqref{abscond2} to hold outside of any finite union of lines in $\IN^2$. For instance, for \eqref{abscond1} we may take $n_j=p^{2^j-1}$ for $j\geq 1$. \par 
In order to satisfy \eqref{abscond2}, assume first we have arranged for $\phi_j(0)\in \alpha_2\IZ_p\setminus\alpha_2^2\IZ_p$. Indeed, this can be done for $\phi_1(0)$ by setting $m_1=1$ and $\phi_1(X)=X-[n_1](\alpha_1)+\alpha_2$. It then follows inductively that $\phi_j(0)\in \alpha_2\IZ_p\setminus\alpha_2^2\IZ_p$ since $\vert c_j \psi_j(0)\vert_p$ is small for our choice of $n_j$. Taking the logarithm of the formal group, we see that there exists $A_j\in\IZ_p$ such that the associated endomorphism of $\calF$ satisfies
$$[A_j](\alpha_2)=\phi_j(0).$$
Since integers are dense in $\mathbb{Z}_p$, we may find $m_{j+1}$ such that $\vert m_{j+1}-A_j\vert_p$ is arbitrarily small. Moreover, define $r_j=(m_{j+1}-m_j)/(p^{2^{j+1}-1}-p^{2^j-1})\in\IQ_p$ for $j\in \IN$. We may add a high enough power of $p$ times any unit to such an $m_{j+1}$ and keep $\vert m_{j+1}-A_j\vert_p$ small. Therefore we may as well assume $m_{j+1}$ chosen so that we also inductively have $r_j\neq r_i$ for any $0\leq i\leq j-1$.
Since the exponential of $\calF$ is continuous, for $\vert m_{j+1}-A_j\vert_p$ small enough we then have
$$\vert [m_{j+1}](\alpha_2)-\phi_j(0)\vert_p=\vert [m_{j+1}](\alpha_2)-[A_j](\alpha_2)\vert_p\leq \prod_{i=1}^j\vert [n_i](\alpha_1)\vert_p,$$
 verifying the inequality \eqref{abscond2}. \par 
Finally, it remains to establish that the set $\{(n_j,m_j):j\in\IN\}$ can not be covered by a finite union of lines. Observe that in the infinite sequence $(n_j,m_j)$ constructed the valuations of $n_j$ grow exponentially, whereas those of $m_j$ are bounded given that the valuations of $\phi_j(0)$ are bounded and  \eqref{abscond2} holds. Suppose an infinite sequence of pairs $(n_j,m_j)$ satisfy for fixed $B_1,B_2,C\in \IZ_p$ a linear equation: 
$$B_1\cdot n_j=B_2\cdot m_j+C.$$
It must then be that the ratio $r_j=(m_{j+1}-m_j)/(p^{2^{j+1}-1}-p^{2^j-1})$ viewed as a sequence in $\IQ_p$ is constant on that infinite sequence. But we constructed $m_j$ such that $r_j$ cannot have such behavior.

\end{proof}
\begin{remark}
    It is clear from the above that such counter-examples exist more generally over discretely valued subrings of $\calO_{\IC_p}$ beyond $\IZ_p$ and the restriction was merely for simplicity of exposition.  
\end{remark}

For the dynamical Mordell--Lang problem on Noetherian spaces, one can observe in \cite{GhiocaBellTuckerNoetherian,BenedettoGhiocaGapPrinciple} a certain gap principle: even though subschemes $V$ with a dense set of iterates need not be periodic, it can be shown that if they aren't periodic there is only a sparse set of iterates lying on $V$. In particular the indices of iterates on $V$ have upper density $\mu=0$ in $\IZ$. These results rely on a version of Szemeredi's theorem (see \cite[Lemma 2.1.]{GhiocaBellTuckerNoetherian}).\par
A straightforward generalization of such results to parameters $\mathbf{k}\in \IZ^n$ can be carried through in our setting, but cannot beat the trivial bound (obtained, say, by Weierstrass preparation) of $O(K^{n-\codim V})$ special points on $V$ having exponents $\{\mathbf{k}\in \IZ^n: \vert k_i\vert \leq K\}$ in some box. It would be interesting to see if one can improve on this and obtain a more general sparsity result. This would also improve potential bounds in applications such as the ones outlined in Section \ref{section:applications}.

\section{Density of classical points on $p$-adic families}\label{section:applications}

We now turn to how such unlikely intersection results can be applied to $p$-adic variational techniques. Indeed, given a $p$-adic analytic family interpolating \emph{classical} algebraic objects such as systems of Hecke eigenvalues or Galois representations associated to automorphic forms, it is interesting and useful to understand when these objects constitute a dense subset of the family (for the appropriate $p$-adic topology). In many constructions, the families in question are finite over a $p$-adic formal disk $\Spf (R[[X_1,\ldots,X_n]])$, and from our unlikely intersection results we shall derive some obstructions to the interpolation of a dense set of so-called \emph{classical} points. Due to counter-examples such as Proposition \ref{prop:counterexample}, the applications are in general not as striking as one might have hoped, but we discuss which non-trivial consequences our unlikely intersection results do yield as well as their limitations. 
We outline in particular some applications concerning the nearly ordinary families of cohomological automorphic forms constructed by Hida, for instance for the group $\GL_2$ over arbitrary number fields.  \par
\subsection{$p$-adic weights}
We shall denote by $T$ a split torus in what follows. A \emph {$p$-adic weight }is a continuous group homomorphism
$$\chi:T(\mathbb{Z}_p)\to \overline{\IQ}_p^\times$$
which we may view as a $\overline{\IQ}_p$-point on \textbf{weight space} $\Spf(\Lambda)$ for the completed group ring
$$\Lambda:=\mathbb{Z}_p[[T(\IZ_p)]].$$
More generally, given a compact abelian $p$-adic analytic group $H$, the functor associating to an Artinian local  $\IZ_p$-algebra $A$ the set of continuous group homomorphisms $\Hom_{cts}(H, A^\times)$ is pro-representable by the formal scheme $\Spf(\mathbb{Z}_p[[H]])$. 
\par
Hida theory produces (nearly) ordinary Hecke algebras $\mathbf{h}^{n.ord}$ which are finite modules over these Iwasawa algebras $\Lambda$ and which parametrize $p$-adic families of automorphic forms by weight (we give some examples below).
\begin{remark}\label{rem:translationinvarianceweights}
In fact, given a reductive group $G_{/\IQ}$, since automorphic forms are invariant under left translation by $G(\IQ)$, it is natural to take as weight space the smaller completed group ring $\mathbb{Z}_p[[T(\IZ_p)/Z_p]]$ where $Z_p$ denotes the closure in $T(\mathbb{Z}_p)$ of the image of $Z(\IQ)\cap K^p\cdot T(\IZ_p)$, where $K^p\subset G(\mathbb{A}_f)$ is an appropriate choice of level away from $p$ and $Z$ denotes the center of $G$. This does not affect our results much: since any compact abelian $p$-adic analytic $H$ is isomorphic to $\IZ_p^d\times F$ for a finite group $F$, the Iwasawa algebras in question are finite products of formal power series rings. We note however that their Krull dimension then depends on the veracity of the Leopoldt conjecture.
\end{remark}
These $p$-adic families can interpolate classical automorphic forms at weights of the following form: 
\begin{defn}
A $p$-adic weight (or point $\chi\in \Spf(\Lambda)$) is called \textbf{locally algebraic} 
if there exists a subgroup $H<T(\mathbb{Z}_p)$ of finite index such that the resulting character 
$$\chi\vert_H: H\to\overline{\IQ}_p^\times$$
is \textbf{algebraic}, meaning there exist integers $(k_1,\ldots,k_d)\in\mathbb{Z}^d$ such that one has $\chi\vert_H(t_1,\ldots,t_d)=t_1^{k_1}\cdots t_d^{k_d}$ for topological generators $(t_1,\ldots,t_d)$ of $H$. 
\end{defn}

We then have the following corollary of our rigidity results: 
\begin{corollary}\label{cor:weights}
Let $\calZ$ denote an irreducible component of the Zariski-closure in weight space $\Spf(\Lambda)$ of a set of locally algebraic points. Then either $\calZ$ is the translate of a formal subtorus by a locally algebraic point, or there exists a fixed power $k>0$ such that algebraic points are Zariski-dense in $[p^k](\calZ)$, where $[p^k](\mathbf{\alpha})=(1+\mathbf{\alpha})^{p^k}-1$ denotes the corresponding endomorphism of the formal torus.
Furthermore, there exist such $\calZ$ which are not translates of formal subtori but interpolate a dense set of algebraic weights. 
\end{corollary}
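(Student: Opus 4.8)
The plan is to recognise the locally algebraic weights lying on $\calZ$ as a subset of the divisible hull of a finitely generated group, to feed this into Theorem~\ref{thm:backwardtorus}, and then to exhibit the promised non-toric example using Proposition~\ref{prop:counterexample}. First I would set up the identification. Since any compact abelian $p$-adic analytic group is isomorphic to $\IZ_p^n\times F$ with $F$ finite, $\Lambda$ (or its quotient from Remark~\ref{rem:translationinvarianceweights}) is a finite product of power series rings, and a choice of topological generators $t_1,\dots,t_n$ of the free part of a factor, normalised so that $t_i\equiv 1\bmod p$, identifies it with $\widehat{\IG}_m^n=\Spf(R[[X_1,\dots,X_n]])$ via $1+X_i\leftrightarrow[t_i]$; under this identification the algebraic weight $(k_1,\dots,k_n)\in\IZ^n$ becomes the point $([k_1](\gamma_1),\dots,[k_n](\gamma_n))$ with $\gamma_i:=t_i-1$ and $[k](X)=(1+X)^k-1$. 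Thus the algebraic weights on the component form a finitely generated subgroup $\Gamma_0$, contained in the finitely generated $\IZ_p$-module $\Gamma:=\IZ_p\gamma_1+\dots+\IZ_p\gamma_n\subset\widehat{\IG}_m^n(\overline{\IQ}_p)$, while a locally algebraic weight on the component is an algebraic weight twisted by a $p$-power-order character, i.e.\ a point $\delta+\tau$ with $\delta\in\Gamma_0$ and $\tau$ a $p^m$-torsion point, so $[p^m](\delta+\tau)=[p^m]\delta\in\Gamma_0\subseteq\Gamma$. Hence the set $\Sigma$ of all locally algebraic weights on the component is a subgroup contained in $\Gamma^{div}$, and $\Sigma\cap\calZ$ is Zariski-dense in the irreducible $\calZ$.

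Next I would apply Theorem~\ref{thm:backwardtorus} to $X=\calZ$ and this $\Sigma$, obtaining a fixed $m$ so that $\calZ\cap(\Sigma\setminus\Gamma^{-m})$ is a finite union of translates of positive-dimensional formal subtori by locally algebraic points. Since $\calZ$ is irreducible and $\Sigma\cap\calZ$ is dense, one of $\Sigma\cap\calZ\setminus\Gamma^{-m}$, $\Sigma\cap\calZ\cap\Gamma^{-m}$ is already dense in $\calZ$. In the former case $\calZ$ is covered by the finitely many subtorus translates furnished by the theorem (which lie in $X=\calZ$, as in Theorem~\ref{thm:generalbackward}) and hence, being irreducible, equals one of them — the first alternative. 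In the latter case every $\zeta\in\Sigma\cap\calZ\cap\Gamma^{-m}$ has $[p^m]\zeta\in\Gamma$; since a nonzero $p$-power torsion point of $\widehat{\IG}_m$ has valuation $<1$ and so cannot lie in $\Gamma\subseteq(p\IZ_p)^n$, the torsion part of $\zeta$ is killed by $[p^m]$ and $[p^m]\zeta$ is a genuine algebraic weight; as $[p^m]\colon\calZ\to[p^m](\calZ)$ is surjective it sends the dense set $\Sigma\cap\calZ\cap\Gamma^{-m}$ onto a Zariski-dense set of algebraic weights in $[p^m](\calZ)$ — the second alternative, with $k=m$ (replace $k$ by $1$ if $m=0$).

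For the last assertion I would invoke Proposition~\ref{prop:counterexample} with $\calF=\widehat{\IG}_m$ over $\IZ_p$ and $\alpha_1=\alpha_2=p$, producing $\phi\in\IZ_p[[X]]$ and an infinite $S\subset\IN^2$, not contained in any finite union of lines, with $\phi([n](p))=[m](p)$ for all $(n,m)\in S$. Inside a two-dimensional component of weight space with coordinates $X,Y$ attached to the generators $1+p$ — so that $([n](p),[m](p))$ is exactly the algebraic weight $(n,m)$ — the graph $\calZ:=\{Y=\phi(X)\}\cong\Spf(\IZ_p[[X]])$ is irreducible and contains the infinitely many algebraic weights indexed by $S$, which by Weierstrass preparation are Zariski-dense in $\calZ$; hence $\calZ$ is a component of the Zariski-closure of a set of (locally) algebraic weights. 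But $\calZ$ is not a formal subtorus translate: by Lemma~\ref{lemma:toruschars} such a translate in $\widehat{\IG}_m^2$ is cut out by a single relation $(1+X)^a(1+Y)^b=c$, on which the algebraic weights $(n,m)$ all satisfy one linear relation $an+bm=\mathrm{const}$ (or there are none), contradicting that $S$ is not covered by finitely many lines.

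The hard part is the bookkeeping in the first step: to pin down the locally algebraic locus as a concrete subset of $\Gamma^{div}$ one must keep track of which component of $\Spf(\Lambda)$ one is on (the prime-to-$p$ part of the conductor being absorbed into the component), of the distinction between the $\IZ$-module $\Gamma_0$ of genuine algebraic weights and the $\IZ_p$-module $\Gamma$ to which Theorem~\ref{thm:backwardtorus} applies, and of the fact that $[p^m]\zeta\in\Gamma$ is already strong enough — because torsion points of $\widehat{\IG}_m$ have small valuation — to force $[p^m]\zeta$ to be algebraic. Granting these identifications, the corollary is a formal consequence of Theorem~\ref{thm:backwardtorus} and Proposition~\ref{prop:counterexample}.
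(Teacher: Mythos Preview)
Your proposal is correct and follows essentially the same route as the paper: identify each component of weight space with $\widehat{\IG}_m^n$, recognise the locally algebraic weights as lying in $\Gamma^{div}$ for the $\IZ_p$-module $\Gamma$ generated by the $\gamma_i=t_i-1$, apply Theorem~\ref{thm:backwardtorus}, and invoke Proposition~\ref{prop:counterexample} for the final assertion. Your write-up is in fact more careful than the paper's one-paragraph proof: you make explicit the case split after Theorem~\ref{thm:backwardtorus}, and you isolate and resolve the point that $[p^m]\zeta\in\Gamma$ a priori only lands in the $\IZ_p$-span, whereas ``algebraic'' means integer exponents --- your valuation argument that $\Gamma\subseteq(p\IZ_p)^n$ contains no nonzero $p$-power torsion is exactly what is needed to close this gap, and the paper leaves it implicit.
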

\begin{proof}
Weight space is a finite union of $p$-adic polydisks $\Spec(R[[X_1,\ldots,X_d]])$ and therefore it is enough to consider each of these. The locally algebraic points on $\Spec(R[[X_1,\ldots,X_d]])$ are, fixing a choice of topological generators $(1+\gamma_1),\ldots,(1+\gamma_d)$ of $1+p\mathbb{Z}_p$, given by points 
$$P_\chi=((1+\gamma_1)^{k_1}\zeta_1-1, \ldots, (1+\gamma_d)^{k_d}\zeta_d-1)$$
for $k_i\in \IZ$ and $p$-power roots of unity $\zeta_i\in \mu_{p^\infty}$ for every $1\leq i\leq d$. They therefore lie in the divisible hull of $\Gamma=\{((1+\gamma_1)^{k_1}-1,\ldots,(1+\gamma_n)^{k_n}-1)\vert \mathbf{k}\in \IZ_p^n\}$ as a subgroup of $\widehat{\IG}_m^d$. The result therefore follows from Theorem \ref{thm:backwardtorus} and Proposition \ref{prop:counterexample}.

\end{proof}
\subsection{Density on Hida families for $\GL_2/F$}
We let $F$ denote a number field of signature $(r_1,r_2)$ and $p$ a prime that splits completely in $F$. Consider a cuspidal automorphic form $\pi$ for $\GL_2(\mathbb{A}_F)$ of some tame level $S\subset \GL_2(\calO_F\otimes\widehat{\IZ})$ which contributes to the cohomology of
the smooth (assuming $S$ is small enough) manifold of dimension $2r_1+3r_2$
$$Y(S)=\GL_2(F)\backslash \GL_2(\mathbb{A}_F)/S\cdot(F\otimes \IR)^\times \cdot K_\infty, $$
where $K_\infty$ denotes the standard maximal compact subgroup of the connected component of the identity at infinity. 
Provided $\pi$ is nearly ordinary above $p$, Hida \cite{MR1313784} proves $\pi$ deforms to a nearly ordinary $p$-adic family by constructing a universal Hecke algebra $\mathbf{h}^{n.ord}(S)$ as a finite module over the completed group ring 
$$\Lambda= R[[\mathbb{G}]]\text{ for }\mathbb{G}:=(\calO_F\otimes \IZ_p)^\times\times(\calO_F\otimes \IZ_p)^\times/ \overline{(S\cap F)^\times}$$
 and for $R$ finite over $\IZ_p$. In other words, as in Remark \ref{rem:translationinvarianceweights}, the group parametrizing the relevant $p$-adic weights is $\mathbb{G}=T(\mathbb{Z}_p)/\overline{(S\cap F)^\times}$ for a maximal $F$-split torus $T$. We note that $\dim(\Lambda\otimes \IQ_p)=[F:\IQ]+r_2+1+\delta_F$ for such $S$, where $\delta_F$ denotes the Leopoldt defect for $F$. Letting $I$ denote the set of embeddings $\sigma:F\hookrightarrow\IC$, the algebraic characters of the torus $T$ can be identified as $X^*(T)\cong\IZ[I]\times\IZ[I]$. Thus, by restricting to a small enough neighborhood of the identity, we can uniquely associate to any locally algebraic weight $\chi$ on $T(\mathbb{Z}_p)/\overline{(S\cap F)^\times}$ a pair $(n(\chi), v(\chi))\in \IZ[I]\times\IZ[I]$ (satisfying a relation on $(S\cap F)^\times$, e.g. $n+2v$ is trivial in Hida's normalizaton). We also associate a finite order character $\epsilon_\chi=\chi(g)g^{-(n(\chi,v(\chi))}$ of $\mathbb{G}$. Hida then proves the relevant control theorem: 
\begin{theorem}[\cite{MR1313784}, Theorem 3.2.]\label{thm:Hidacontrol}
Let $\chi\in\Spf(\Lambda)(\overline{\IQ}_p)$ be a locally algebraic weight and $P_\chi\in\Spec(\Lambda)$ the corresponding prime. If $n(\chi)\geq 0$, then the natural specialization map $$\mathbf{h}^{n.ord}(S)/P_\chi\cdot  \mathbf{h}^{n.ord}(S)\to \mathbf{h}^{n.ord}_{\chi}(S\cap U_0(p^\alpha), R)$$
has finite kernel and cokernel, with $\alpha$ the smallest integer such that $\epsilon_\chi$ factors through $\mathbb{G}/p^\alpha\mathbb{G}$ and $U_0(p^\alpha)$ denoting the subgroup $\{g\in\GL_2(\calO_F\otimes\widehat{\IZ})\vert g_p=\begin{pmatrix}a& b\\c& d\end{pmatrix}\text{ with } c\in p^\alpha\cdot (\calO_F\otimes\IZ_p)\}$ .
\end{theorem}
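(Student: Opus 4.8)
The plan is to reconstruct Hida's argument for such control theorems: realize the two nearly ordinary Hecke algebras as images of one abstract Hecke algebra acting on nearly ordinary parts of cohomology of the locally symmetric spaces attached to the tower of levels $S\cap U_0(p^\bullet)$, prove the comparison at the level of these cohomology modules, and then deduce the statement for Hecke algebras by finiteness. Write $Y_\alpha:=Y(S\cap U_0(p^\alpha))$, let $Y_\infty$ denote the $p$-power level tower $(Y_\alpha)_\alpha$, and let $e=\lim_n\prod_{v\mid p}U_v^{n!}$ be Hida's ordinary idempotent, which converges on $H^\bullet(Y_\alpha,M)$ for every finite $\IZ_p$-coefficient system $M$ because the $U_v$ act with uniformly bounded denominators. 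By construction $\mathbf{h}^{n.ord}(S)$ is the image of the abstract Hecke algebra $\calT$ — generated by the Hecke operators away from $Sp$, the $U_v$ for $v\mid p$, and the diamond operators $\langle g\rangle$ for $g\in\mathbb{G}$ — in $\End_\Lambda(e\,H^\bullet(Y_\infty,R))$, cohomology taken in the cuspidal band $q_0\le\bullet\le q_0+r_2$ with $q_0=r_1+r_2$, and $\Lambda=R[[\mathbb{G}]]$ acting through the diamonds; while $\mathbf{h}^{n.ord}_\chi(S\cap U_0(p^\alpha),R)$ is the image of the same $\calT$ in $\End_R(e\,H^\bullet(Y_\alpha,\calL(n(\chi),v(\chi);R)\otimes\epsilon_\chi))$, where $\calL(n,v;R)$ is the local system attached to the algebraic highest-weight-$(n,v)$ representation of $\GL_2(\calO_F\otimes\IZ_p)=\prod_{v\mid p}\GL_2(\IZ_p)$. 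It therefore suffices to compare these two cohomology modules $\calT$-equivariantly with finite kernel and cokernel.

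The crux is the weight-independence of ordinary cohomology up to the nebentype twist. Restricting $\calL(n(\chi),v(\chi);R)\otimes\epsilon_\chi$ along the Iwahori $I=S\cap U_0(p)=\prod_{v\mid p}I_v$, the highest-weight line is a rank-one $I$-stable quotient on which $I$ transforms through a finite-order character $\theta_\chi$ (matching $\chi$ on the diamonds), so there is a short exact sequence of $I$-modules, compatible with the $U_v$-correspondences at level $p^\alpha$,
\[
0\longrightarrow\calK\longrightarrow\calL(n(\chi),v(\chi);R)\otimes\epsilon_\chi\longrightarrow R(\theta_\chi)\longrightarrow 0,
\]
with the property that $e$ kills $H^\bullet(Y_\alpha,\calK)$ (up to finite groups): filtering $\calK$ by $I$-submodules whose graded pieces are lower-weight characters — possible precisely because the dominance hypothesis $n(\chi)\ge 0$ holds — one sees that conjugating each $U_v$ by the filtration shift multiplies it by a positive power of $p$, so $U_v^{n!}\to 0$ there and no nonzero ordinary class survives. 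Applying $e$ to the long exact cohomology sequence then gives a $\calT$-equivariant isomorphism, up to finite groups,
\[
e\,H^\bullet(Y_\alpha,\calL(n(\chi),v(\chi);R)\otimes\epsilon_\chi)\;\longrightarrow\;e\,H^\bullet(Y_\alpha,R(\theta_\chi)),
\]
reducing the theorem to the case of the finite-order coefficient sheaf $R(\theta_\chi)$.

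It remains to identify $e\,H^\bullet(Y_\alpha,R(\theta_\chi))$ with $e\,H^\bullet(Y_\infty,R)\otimes_\Lambda\Lambda/P_\chi$ up to a finite group. Since $\theta_\chi$ factors through $\mathbb{G}_\alpha:=\mathbb{G}/p^\alpha\mathbb{G}$, the sheaf $R(\theta_\chi)$ on $Y_\alpha$ is the $\theta_\chi$-isotypic summand of the pushforward of the constant sheaf along the finite covering $Y(S\cap U_1(p^\alpha))\to Y_\alpha$ with Galois group $\mathbb{G}_\alpha$, where $U_1(p^\alpha)=\ker(U_0(p^\alpha)\twoheadrightarrow\mathbb{G}_\alpha)$; the Cartan--Leray spectral sequence then presents $e\,H^\bullet(Y_\alpha,R(\theta_\chi))$ as the $\theta_\chi$-eigenspace of $\mathbb{G}_\alpha$ on $e\,H^\bullet(Y(S\cap U_1(p^\alpha)),R)$ — which is $e\,H^\bullet(Y_\infty,R)\otimes_\Lambda\Lambda/P_\chi$, the algebraic part of $P_\chi$ having been absorbed in the previous step — plus higher $\mathbb{G}_\alpha$-cohomology contributions that are finite, $\mathbb{G}_\alpha$ being finite and the cohomology groups finitely generated over $\IZ_p$. (Here one uses Hida's finiteness theorem that $e\,H^\bullet(Y_\infty,R)$ is a finitely generated $\Lambda$-module, itself a consequence of control at the trivial weight together with topological Nakayama, and near-freeness of this module to keep base change along $\Lambda\to\Lambda/P_\chi$ exact up to bounded torsion.) Chaining the comparisons yields a $\calT$-equivariant map $e\,H^\bullet(Y_\infty,R)\otimes_\Lambda\Lambda/P_\chi\to e\,H^\bullet(Y_\alpha,\calL(n(\chi),v(\chi);R)\otimes\epsilon_\chi)$ with finite kernel and cokernel. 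Passing to the images of $\calT$, the induced map $\mathbf{h}^{n.ord}(S)/P_\chi\cdot\mathbf{h}^{n.ord}(S)\to\mathbf{h}^{n.ord}_\chi(S\cap U_0(p^\alpha),R)$ is surjective (both targets are generated by the images of the same Hecke operators) and, over $\IQ_p$ where the $\theta_\chi$-projector is available and all the comparisons above become honest isomorphisms, it is injective; since both algebras are finitely generated $R$-modules (finiteness of cohomology, Noetherianity of $\Lambda$, $\mathbf{h}^{n.ord}(S)$ finite over $\Lambda$), it has finite kernel and cokernel.

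The main obstacle is the vanishing (up to finite groups) of $e\,H^\bullet(Y_\alpha,\calK)$: this requires careful bookkeeping of how the $U_v$ for the $[F:\IQ]$ primes above $p$ interact with the Iwahori filtration of the $\Sym$-type module $\calK$, and it is exactly here that the hypothesis $n(\chi)\ge 0$ is indispensable — without dominance the highest-weight line need not carry a unit $U_v$-eigenvalue and the comparison breaks down. A secondary point, particular to the case $r_2>0$ of interest here, is that interior cohomology spreads over the whole band $q_0\le\bullet\le q_0+r_2$, so the exact-sequence and idempotent arguments must be run uniformly across these degrees; since $e$ is an idempotent and every map is $\calT$-equivariant this is bookkeeping, not a new difficulty. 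Finally, isolating the cuspidal rather than the Eisenstein contribution — so that the relevant Hecke modules are genuinely those attached to $\pi$ — is handled as usual by localizing at a non-Eisenstein maximal ideal of $\calT$ or by absorbing the discrepancy into the finite error term.
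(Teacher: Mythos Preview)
The paper does not prove this theorem. It is stated with attribution to Hida (\cite{MR1313784}, Theorem~3.2) and used as a black box; there is no \texttt{proof} environment following it, only a sentence explaining the notation $\mathbf{h}^{n.ord}_\chi(S\cap U_0(p^\alpha),R)$ and the coefficient sheaves $\calL(\chi,A)$. So there is nothing in the paper to compare your proposal against.

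That said, your sketch is a faithful reconstruction of the standard strategy for Hida's control theorems: realize both Hecke algebras as images of an abstract Hecke algebra on nearly ordinary cohomology, compare weights via the highest-weight-line argument (which is exactly where $n(\chi)\ge 0$ enters), and then descend through the level tower using that the $\Lambda$-module of ordinary cohomology is finitely generated. This is essentially the argument in the cited reference. One caution: your claim that the specialization map is \emph{surjective} on Hecke algebras is stronger than what the theorem asserts (only finite cokernel), and in Hida's setting over general number fields the comparison of cohomology modules is really only up to finite error rather than an honest surjection, so you should not overclaim surjectivity there. But since the paper itself offers no argument, your write-up is supplying content the paper deliberately outsources.
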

Here $\mathbf{h}^{n.ord}_\chi(S\cap U_0(p^\alpha), R)$ denotes the usual subalgebra of the endomorophisms of the nearly ordinary part of the parabolic cohomology
$$\End_R\left( H^{r_1+r_2}_{P,n.ord}(Y(S\cap U_0(p^\alpha)), \mathcal{L}(\chi,R[1/p]/R))\right)$$
generated by the Hecke operators and the action of $\mathbb{G}$. The coefficients $\mathcal{L}(\chi,A)$ denote the locally constant sheaf associated to the $\GL_2(\calO_F\otimes\IZ_p)$-representation for $n(\chi)\geq 0$ (call such weights \textbf{arithmetic}) given by acting on polynomials in $(X_\sigma,Y_\sigma)_{\sigma\in I}$ homogeneous of degree $n_\sigma$ with $A$-coefficients and twisting by a $v(\chi)$-th power of the determinant. 
In particular, specializing at locally arithmetic weights $\chi$, we recover Hecke eigensystems of classical cohomological automorphic forms of the appropriate level, provided such eigensystems exist. \par
When $F$ is totally real, the Hecke algebras are torsion-free \cite[Theorem II]{Hidatotreal} over $\Lambda$. However, as soon as $F$ has a complex place, it is known that for a cuspidal automorphic form $\pi=\pi_\infty\otimes \pi_f$ which is cohomological, $\pi_\infty$ has to contribute to a unitary representation at the complex places, and therefore we only find characteristic zero cuspidal cohomology when the coefficients (and thus the weights) are conjugate self-dual (see, e.g, \cite[Lemma 1.3.]{BorelCasselman} and \cite[Thm I.5.3.]{MR1721403}). We call such weights $\chi\in\Spec(\Lambda)(\overline{\IQ}_p)$ \textbf{locally parallel}. This leads to Hecke algebras being torsion over $\Lambda$ as soon as $r_2>0$, and in fact Hida conjectures (\cite[Conjecture 4.3.]{MR1313784}): 
\begin{conj}[Hida]
We have $\dim (\mathbf{h}^{n.ord}(S)\otimes \IQ_p)=[F:\IQ]+1$, so that the support of $\mathbf{h}^{n.ord}(S)$ in $\Lambda$ has codimension $r_2$. 
\end{conj}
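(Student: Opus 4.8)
This is an open conjecture — as noted above, pinning the codimension at $r_2$ is a $\GL_N$-type Leopoldt statement (cf. \cite{HidaGLN,Khare2014PotentialAA}) — so the plan is necessarily conditional and splits cleanly into a lower bound that is essentially unconditional and an upper bound that carries all of the difficulty. \textbf{Lower bound.} First I would show $\dim(\mathbf{h}^{n.ord}(S)\otimes\IQ_p)\geq[F:\IQ]+1$. The robust route is Galois-theoretic: attach to $\pi$ its residual representation $\overline{\rho}\colon G_F\to\GL_2(\overline{\IF}_p)$ (the associated $p$-adic Galois representations for cohomological $\GL_2/F$ forms are now available), form the universal nearly ordinary deformation ring $\calR$ with the prescribed local conditions at the primes above $p$ and at the tame level $S$, and produce a surjection $\calR\twoheadrightarrow\mathbf{h}^{n.ord}(S)$ from the family of Galois representations carried by Hida's Hecke algebra. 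Then $\dim(\mathbf{h}^{n.ord}(S)\otimes\IQ_p)$ is bounded below by the expected dimension of $\calR$, which the global Euler characteristic formula together with the nearly ordinary local conditions evaluates to $1+\dim H^1_{n.ord}(G_{F,\Sigma},\ad^0\overline{\rho})-\dim H^2_{n.ord}(G_{F,\Sigma},\ad^0\overline{\rho})\geq[F:\IQ]+1$, exactly the kind of count carried out by Calegari--Mazur and Childers \cite{MR2461903,Childers2021GaloisDS} and which one would here adapt to a general number field. A more hands-on alternative is to use Theorem \ref{thm:Hidacontrol} directly: the locally parallel arithmetic weights at which it produces classical cohomological eigensystems are Zariski-dense in a formal subscheme of weight space of dimension $[F:\IQ]+1$, which already forces the support of $\mathbf{h}^{n.ord}(S)$ to have at least that dimension.

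\textbf{Upper bound (the hard part).} The reverse inequality is the crux. On the Galois side it says precisely that the nearly ordinary deformation problem for $\overline{\rho}$ has no parameters beyond the expected $[F:\IQ]+1$; by the Euler characteristic formula this is equivalent to the vanishing $H^2_{n.ord}(G_{F,\Sigma},\ad^0\overline{\rho})=0$ together with $\dim H^1_{n.ord}$ being exactly its Euler-characteristic value. This vanishing is a nonabelian, $\GL_2/F$ analogue of the Leopoldt conjecture — for $F$ imaginary quadratic it amounts to the non-existence of certain unexpected adjoint Selmer classes — and is genuinely open. On the automorphic side the same phenomenon reappears as the requirement that the nearly ordinary cuspidal cohomology, which for a fixed $\pi$ is spread over the $r_2+1$ consecutive degrees from $r_1+r_2$ to $r_1+2r_2$, collapse in the limit over $U_0(p^\alpha)$ to a $\Lambda$-module supported in codimension exactly $r_2$; making this precise needs a spectral-sequence or Poincar\'e-duality input that is likewise not available unconditionally. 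I should stress that the unlikely intersection results of Section \ref{section:rigidity} do not help here: they constrain the \emph{density} of classical weights inside a given support, not the \emph{dimension} of that support, so a genuinely new Galois-cohomological ingredient is unavoidable — this is the main obstacle.

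\textbf{Assembling the argument, conditionally.} Granting the Leopoldt-type vanishing $H^2_{n.ord}(G_{F,\Sigma},\ad^0\overline{\rho})=0$, I would first read off $\dim(\calR\otimes\IQ_p)=[F:\IQ]+1$ from the Euler characteristic formula, then promote the surjection $\calR\twoheadrightarrow\mathbf{h}^{n.ord}(S)$ to an isomorphism after inverting $p$ — up to finite kernel and cokernel — by a patching argument in this non-minimal, $\Lambda$-torsion setting, using Theorem \ref{thm:Hidacontrol} to guarantee that enough classical points are met, and finally combine with the unconditional lower bound above to conclude $\dim(\mathbf{h}^{n.ord}(S)\otimes\IQ_p)=[F:\IQ]+1$. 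The assertion that the support then has codimension $r_2$ in $\Spf(\Lambda)$ follows from $\dim(\Lambda\otimes\IQ_p)=[F:\IQ]+r_2+1+\delta_F$ once one additionally knows the Leopoldt defect $\delta_F$ vanishes. Thus the whole scheme rests on the single hard input of a $\GL_2/F$ Leopoldt-type vanishing, which is exactly why the statement is recorded here only as a conjecture.
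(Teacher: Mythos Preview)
The paper does not prove this statement: it is recorded there purely as Hida's conjecture \cite[Conjecture 4.3]{MR1313784}, with no proof or proof sketch, and serves only as context for the density questions that follow. You correctly identify this as open and correctly flag that the codimension-$r_2$ assertion is tantamount to a $\GL_2/F$ Leopoldt-type statement, which is precisely how the paper frames it elsewhere. Since there is no proof in the paper to compare against, your conditional outline is not in competition with anything; it is a reasonable road map of where the difficulty lies, and your caveat that the rigidity results of Section~\ref{section:rigidity} do not touch the dimension of the support is exactly right.
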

Let $\Delta\subset \Spec(\Lambda)(\overline{\IQ}_p)$ denote the locally parallel subset of the arithmetic weights. We thus arrive at the interpolation question: 
\begin{question}\label{question:density}
For a component $\calZ$ of $\Supp_{\Lambda}\mathbf{h}^{n.ord}(S)$, when is $\calZ\cap \Delta$ Zariski-dense in $\calZ$ ?
\end{question}
In particular, in cases with a negative answer to question \ref{question:density} the $p$-adic family does not interpolate a Zariski-dense set of \textbf{classical points} corresponding to Hecke eigensystems of classical cuspidal automorphic forms. \par
We remark that for $\GL_1/F$, the analogous question is addressed by Loeffler in \cite{LoefflerDensity}, noting however that for $\GL_2$ one no longer has an explicit description of the support. We now outline some consequences of our unlikely intersection results to this question:

\begin{theorem}\label{thm:hidafamapplication}
Let $\pi$ denote a cuspidal cohomological automorphic form for $\GL_2/F$ which is nearly ordinary above $p$. If the component $\calH$ of $\pi$ in $\mathbf{h}^{n.ord}(S)$ interpolates a Zariski-dense set of classical automorphic forms, then exactly one of the following holds: 
\begin{enumerate}
    \item The corresponding component $\calZ$ of $\Supp_{\Lambda}\calH$ is a \emph{special} subscheme of $\Spec(\Lambda)$: the translate of a formal subtorus of $\widehat{\IG}_m^n$ by an arithmetic, locally parallel point. 
    \item The finite order characters of arithmetic points appearing on the corresponding component $\calZ$ of $\Supp_{\Lambda}\calH$ have uniformly bounded order, and the scheme $[p^n](\calZ)$ contains a Zariski--dense set of algebraic, parallel weights.
\end{enumerate}
\end{theorem}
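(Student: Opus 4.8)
The plan is to deduce the statement from Theorem \ref{thm:backwardtorus}, after re-expressing the automorphic hypothesis as a Zariski-density statement for special points on weight space. \textbf{First}, I would localise at one of the finitely many components $\Spf(R[[X_1,\ldots,X_d]])$ of $\Spf(\Lambda)$ meeting $\calZ$, fix topological generators as in the proof of Corollary \ref{cor:weights} so as to identify this component with the formal torus $\widehat{\IG}_m^d$, and invoke Hida's control theorem (Theorem \ref{thm:Hidacontrol}): the classical cuspidal automorphic forms interpolated by $\calH$ occur at (a subset of) the arithmetic weights $\chi$ with $P_\chi \in \calZ$, and for the corresponding coefficient system to support nonzero characteristic-zero cuspidal cohomology such a $\chi$ must be locally parallel (a genuine constraint once $F$ has a complex place; cf. the discussion preceding Question \ref{question:density}). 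Thus the hypothesis translates into: the locally parallel arithmetic subset $\Delta\cap\calZ$ is Zariski-dense in $\calZ$. By the explicit description of locally algebraic weights recalled in the proof of Corollary \ref{cor:weights}, this set is contained in the divisible hull $\Gamma_{\parallel}^{div}$ of a finitely generated $\IZ_p$-submodule $\Gamma_{\parallel}\subset\widehat{\IG}_m^d(\overline{\IQ}_p)$ cut out by the parallel sublattice of $X^*(T)$.

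\textbf{Next}, I would normalise. Since $\Delta\cap\calZ\neq\emptyset$, choose $P_0\in\Delta\cap\calZ$ and pass to the translate $T_{P_0}\calZ$, an irreducible closed affine formal subscheme through the origin (Lemma \ref{lemma:translate}) over a finite extension of $R$; because $\Gamma_{\parallel}^{div}$ is a subgroup containing $P_0$, the set $\Sigma:=(\Delta\cap\calZ)-P_0$ is again contained in $\Gamma_{\parallel}^{div}$ and remains Zariski-dense in $T_{P_0}\calZ$. Applying Theorem \ref{thm:backwardtorus} with $X=T_{P_0}\calZ$ and this $\Sigma$ produces a fixed $m\in\IN$ and finitely many positive-dimensional formal subtorus translates $\calT_1,\ldots,\calT_f\subseteq T_{P_0}\calZ$ with $T_{P_0}\calZ\cap(\Sigma\setminus\Gamma_{\parallel}^{-m})=\bigcup_{i}\calT_i\cap(\Sigma\setminus\Gamma_{\parallel}^{-m})$.

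\textbf{Then} the dichotomy falls out. If some $\calT_i$ equals $T_{P_0}\calZ$, then $T_{P_0}\calZ$ is itself a formal subtorus (a translate of a formal subgroup through the origin coincides with that subgroup), so $\calZ$ is the translate of a formal subtorus by $P_0\in\Delta$, i.e. by an arithmetic, locally parallel point — this is case (1). Otherwise $\bigcup_i\calT_i$ is a proper closed formal subscheme of the irreducible $T_{P_0}\calZ$, so $\Sigma\setminus\Gamma_{\parallel}^{-m}$ is not Zariski-dense; as $\Sigma$ is, the subset $\Sigma\cap\Gamma_{\parallel}^{-m}$ must be Zariski-dense in $T_{P_0}\calZ$. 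Unwinding the translation, the weights $\chi\in\Delta\cap\calZ$ with $\chi-P_0\in\Gamma_{\parallel}^{-m}$ are Zariski-dense in $\calZ$; picking $a$ with $[p^a](P_0)\in\Gamma_{\parallel}$ and setting $n:=\max(m,a)$, each such $\chi$ lies in $\Gamma_{\parallel}^{-n}$, which — reading off the parametrisation — says precisely that the finite order character $\epsilon_\chi$ of $\chi$ has order dividing $p^n$, hence uniformly bounded; and the finite isogeny $[p^n]$ maps this Zariski-dense family into $\Gamma_{\parallel}$, so $[p^n](\calZ)$ contains a Zariski-dense set of algebraic, parallel weights. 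This is case (2), which is by construction exclusive of case (1).

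\textbf{Main obstacle.} The essential mathematical input, Theorem \ref{thm:backwardtorus}, is already available, so the real work is in the first step: pinning down, for a fixed component of weight space, the precise description of the locally parallel arithmetic weights as a subset of a single divisible hull $\Gamma_{\parallel}^{div}$ (carefully tracking the cone $n(\chi)\ge 0$, the relation imposed by $\overline{(S\cap F)^\times}$, and the role of $\epsilon_\chi$ as the "divisible-hull direction"), together with the translation between "$\calH$ interpolates a Zariski-dense set of classical forms" and "$\Delta\cap\calZ$ is Zariski-dense", which relies on nonvanishing of cuspidal cohomology exactly at locally parallel weights. The counterexamples of Proposition \ref{prop:counterexample}, together with the base-change families mentioned in the introduction, show that case (2) cannot in general be dispensed with.
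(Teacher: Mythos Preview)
Your proposal is correct and follows essentially the same route as the paper: the paper's proof is a single line invoking Corollary \ref{cor:weights}, which is itself the direct consequence of Theorem \ref{thm:backwardtorus} that you spell out in detail. Your additional care in the first step (translating the automorphic hypothesis into density of $\Delta\cap\calZ$ via the control theorem and the parallel-weight constraint) and in unwinding the dichotomy is exactly what the paper leaves implicit in the phrase ``more or less a reformulation.''
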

\begin{proof}This is more or less a reformulation of results from Corollary \ref{cor:weights}. 
\end{proof}
Let us give a slightly more detailed consequence of the rigidity results assuming for simplicity we are working with $\SL_2/F$. In this case we may just take $\Lambda=R[[(\calO_F\times \IZ_p)^\times]]$ so that weight space is a finite union of $r_1+2r_2$-dimensional $p$-adic polydisks. 

\begin{prop}\label{prop:generalfields}
   Fix a completely split prime $p$ and a tame level $N$ and let $\Gamma_i(M)$ for $i=0,1$ denote the usual congruence subgroups of level $M$ of $\SL_2(\calO_F)$. Then exactly one of the following occurs:
    \begin{enumerate}[(i)]
        \item There exists a constant $C$ uniform in all algebraic weights $\mathbf{k}$ such that the number of cuspidal Hecke eigensystems ordinary at $p$ and new at level $\Gamma_0(N)\cap\Gamma_1(p^n)$ is trivial for all  $\mathbf{k}$ and $n>C$: 
        $$\dim_{\IC}(S^{new,ord}_{\mathbf{k}}(\Gamma_0(N)\cap\Gamma_1(p^n),\IC))=0 \text{ for all }n>C.$$
        \item There exists an arithmetic, locally parallel weight $\mathbf{k}$ and a positive integer $C$ such that 
        $$\dim_{\IC}(S^{new,ord}_{\mathbf{k}}(\Gamma_0(N)\cap\Gamma_1(p^n),\IC))\geq 1 \text{ for all }n>C.$$
    \end{enumerate}
 \end{prop}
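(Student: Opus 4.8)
The plan is to deduce Proposition \ref{prop:generalfields} from Theorem \ref{thm:hidafamapplication} (equivalently Corollary \ref{cor:weights}) together with Hida's control theorem (Theorem \ref{thm:Hidacontrol}). First I would translate the statement about dimensions of spaces of newforms into a statement about the Hida family: if, contrary to both (i) and (ii), for every constant $C$ there is some arithmetic locally parallel weight $\mathbf{k}$ and some level $\Gamma_0(N)\cap\Gamma_1(p^n)$ with $n>C$ carrying a nonzero cuspidal $p$-ordinary new eigensystem, then by the usual finiteness (only finitely many tame levels and bounded-dimensional cohomology at each arithmetic weight once the level is fixed, via Hida's control theorem) these eigensystems must organize into finitely many nearly ordinary $p$-adic families over $\Lambda$, at least one of which, call it $\calH$, has the property that its support $\calZ\subset\Spf(\Lambda)$ contains a set of arithmetic locally parallel weights that is \emph{not} contained in any $\Gamma^{-m}$ (the newness at level $p^n$ forces the finite-order part $\epsilon_\chi$ to have arbitrarily large conductor, i.e. the weights escape every $\Gamma^{-m}$). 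The key point is that new-at-$\Gamma_1(p^n)$ with $n$ unbounded is exactly the condition that the corresponding points of $\widehat{\IG}_m^d$ lie in $\Gamma^{div}\setminus\Gamma^{-m}$ for all $m$.

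Next I would apply the dichotomy of Corollary \ref{cor:weights} / Theorem \ref{thm:backwardtorus} to the Zariski-closure $\calZ$ of this set of special points. Either $\calZ$ is the translate of a positive-dimensional formal subtorus by a locally algebraic (in fact, since the weights are locally parallel, arithmetic locally parallel) point, or there is a fixed $k$ such that algebraic weights are Zariski-dense in $[p^k](\calZ)$ and the finite-order characters appearing have bounded order. In the first case, $\calZ$ being a positive-dimensional special subscheme means that along the subtorus direction the finite-order twist is constant while the algebraic weight varies; translating back through Hida's control theorem, this produces, for a single arithmetic locally parallel weight $\mathbf{k}$ lying on $\calZ$ (the translation point, or one differing from it by a character of bounded order which we can absorb by enlarging the tame nebentypus or $C$), a nonzero ordinary cuspidal eigensystem new at $\Gamma_0(N)\cap\Gamma_1(p^n)$ for all $n$ past some bound, which is precisely conclusion (ii). In the second case the finite-order characters are uniformly bounded, contradicting that our eigensystems were new at $\Gamma_1(p^n)$ for unbounded $n$; hence this case cannot arise under the standing assumption, and we are forced into (i) or (ii). Finally I would check these two are mutually exclusive: (i) asserts vanishing for all $n$ past $C$, (ii) asserts nonvanishing for all $n$ past $C$ at a fixed weight, so they cannot both hold.

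The main obstacle I expect is the bookkeeping in passing between ``new at level $\Gamma_0(N)\cap\Gamma_1(p^n)$'' on the automorphic side and ``lies in $\Gamma^{div}\setminus\Gamma^{-m}$ with $\epsilon_\chi$ of conductor $p^{\geq m}$'' on the weight-space side, and in making sure that the \emph{translation point} of a special subtorus $\calZ$ is itself (or is close enough to) an arithmetic locally parallel weight so that Hida's control theorem actually yields a classical eigensystem there — Theorem \ref{thm:Hidacontrol} requires $n(\chi)\geq 0$, so one must argue that the subtorus direction can be chosen to keep $n(\chi)$ dominant while moving within $\calZ$, and that the bounded finite-order discrepancy can be folded into the constant $C$ or into a mild enlargement of the tame level. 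A second, more technical point is ensuring that only finitely many $p$-adic families are needed to account for all the unbounded-conductor eigensystems: this follows from Hida's theory (each $\mathbf{h}^{n.ord}(S)$ is module-finite over $\Lambda$, and there are finitely many relevant tame levels $S$ dividing the fixed $N$ and the finitely many nebentypus possibilities at $p$ once the order is controlled), but the case where conductors are genuinely unbounded requires the observation that newforms of $p$-power level $p^n$ for large $n$ still arise as $p$-stabilizations interpolated by families of fixed tame level, which is exactly the content of the control theorem read in reverse. Modulo these translation issues, the proof is a direct application of the machinery already in place.
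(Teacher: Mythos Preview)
Your overall strategy matches the paper's: reduce to the dichotomy of Theorem~\ref{thm:hidafamapplication} (no subtorus translate $\Rightarrow$ bounded Nebentype order $\Rightarrow$ (i); subtorus translate $\Rightarrow$ (ii)), with Hida's control theorem mediating between automorphic data and points on weight space. The contrapositive packaging (assume not-(i), produce newforms of unbounded $p$-power conductor, land on a single component by module-finiteness of $\mathbf{h}^{n.ord}$ over $\Lambda$) is fine and is exactly what the paper means by ``every $p$-ordinary newform lifts to one of a finite number of $p$-adic families of tame level $N$.''

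There is, however, a real confusion in your derivation of (ii) from the subtorus case. You write that ``along the subtorus direction the finite-order twist is constant while the algebraic weight varies,'' and then conclude that for a \emph{single} $\mathbf{k}$ one obtains newforms at $\Gamma_1(p^n)$ for all large $n$. These two sentences contradict each other: if the finite-order part were constant along the subtorus, the Nebentype conductor would be fixed and you would get many weights at one level, which is the opposite of (ii). The correct mechanism, and the one the paper uses, is that a positive-dimensional formal subtorus $H$ contains torsion of \emph{every} exact order $p^n$; hence the translate $P+H$ contains, for each large $n$, a locally algebraic point whose algebraic part equals that of $P$ and whose finite-order part has conductor $p^n$. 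Specializing the family at these points via Theorem~\ref{thm:Hidacontrol} yields classical eigensystems of the fixed weight $\mathbf{k}=n(P)$ that are new at $\Gamma_1(p^n)$ for all large $n$, which is (ii). So the algebraic weight is fixed and the Nebentype varies, not the other way around.

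Your stated worry about whether the translation point is arithmetic and locally parallel is already handled: Theorem~\ref{thm:hidafamapplication}(1) (ultimately Theorem~\ref{thm:generalbackward}(2)) guarantees the translate is by an element of the set $\Sigma$ of special points under consideration, which here consists precisely of arithmetic, locally parallel weights. You therefore do not need to argue separately that one can ``stay dominant'' along the subtorus; the translation point itself already has $n(\chi)\geq 0$, and the torsion perturbations do not change the algebraic part.
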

 \begin{proof}
  The results follow from Theorem \ref{thm:hidafamapplication}: the first case corresponds to the absence of a positive dimensional formal subtorus translate on a component $\calZ$ of the support of the ordinary Hecke algebra of tame level $N$. Here we use the fact that every $p$-ordinary newform lifts to one of a finite number of $p$-adic families of tame level $N$. In the second case, the presence of a positive dimensional formal subtorus on a component of $\Supp_{\Lambda}\calH$ means that specializing along \textbf{every} weight $\chi\in\Spec(\Lambda)(\overline{\IQ}_p)$ in the kernel of at most $n-1$ fixed cocharacters in $X^*(\widehat{\IG}_m^n)\cong \IZ_p^n$ recovers a classical Hecke eigensystem. It is easy to see this kernel in particular contains weights of Nebentype $p^n$ for all large enough $n$. The lower bound thus follows by specialization and by the control theorem \ref{thm:Hidacontrol}. 
 \end{proof}
 
 When $F$ is imaginary quadratic, a special case of the rigidity results above was used by the author in \cite{Serban2018} to prove certain ordinary families of Bianchi modular forms only interpolate finitely many classical automorphic forms. When $F$ is imaginary quadratic, Hida has proved \cite[Thm 6.2.]{MR1313784} that the support of $\mathbf{h}^{n.ord}$ is equidimensional of codimension $1$ in $\Lambda$. Here weight space can be viewed as a finite number of copies of two-dimenisonal formal tori. Moreover, in this case it is easy to exclude the second option in Theorem \ref{thm:hidafamapplication} given that the closure of any infinte set of algebraic, parallel weights is the diagonal in $\Spec(\Lambda)$, which is a formal subtorus. Therefore if the support $\calZ$ contains a dense set of classical weights, either $\calZ$ is the diagonal or $\calZ$ must contain weights corresponding to classical automorphic forms with Nebentypus for every torsion point on a one dimensional formal subtorus as in Proposition \ref{prop:generalfields} part (ii). \par 
 For concrete families, both of these cases can then be excluded by computation of spaces of Bianchi modular forms to prove finiteness of classical points on a given Hida family, when it holds (though in practice there are computational cost obstacles). Finally, we note that it can indeed happen that the support $\calZ$ is the diagonal, for instance by base-changing a Hida family of modular forms to $F$. It would be interesting to establish whether outside of components arising from functoriality constructions such as base-change or automorphic induction, only finitely many classical automorphic forms lie on a $p$-adic family. We refer the reader to \cite{Serban2018} for details, but part of the motivation for this paper was to provide tools to investigate such questions more broadly. 
 

\subsection{More general density questions}
We end with some considerations beyond the density in $p$-adic families of classsical automorphic forms for $\GL_1$ or $\GL_2$. Letting more generally $G=\Res_{F/\IQ}G_0$ denote the Weil restriction of a non-split inner form $G_0$ of $\GL_N/F$, Hida constructs in \cite{HidaGLN} $p$-adic nearly ordinary families of automorphic forms for the reductive group $G/\IQ$. Letting $T/\IZ$ denote a maximal split torus of $\GL_N/\IZ$, Hida constructs universal $p$-adic Hecke algebras $\mathbf{h}^{n.ord}$  which are finite modules over the completed group ring 
$$\Lambda=\IZ_p[[T(\calO_F\otimes \IZ_p)/\overline{\calO_F^\times}]],$$
together with a control theorem \cite[Theorem 6.5.]{HidaGLN} stating one can recover for $\chi \in X^*(T)$ the Hecke algebra of weight $\chi$ up to finite error by specializing $\mathbf{h}^{n.ord}$ along the corresponding prime $P_\chi\in \Spec(\Lambda)$. Hida conjectures \cite[Conjecture 7.1]{HidaGLN} the Hecke algebra has, for small enough level away from $p$, codimension equal to the defect 
$$\ell_0=\rank (G_\infty)-\rank (Z_\infty K_\infty),$$
where $G_\infty:=\GL_N(F\otimes_\IQ\IR)$, $K_\infty\subset G_\infty$ is a maximal compact, and $Z_\infty$ denotes the center of $G_\infty$. When $l_0\neq 0$, it is known that $\mathbf{h}^{n.ord}$ is torsion over $\Lambda$ (\cite[Corollary 6.24]{Khare2014PotentialAA}), but the full conjecture can be viewed as a non-abelian version of the Leopoldt conjecture (see, e.g., \cite[Corollary 7.4]{HidaGLN} and \cite[Theorem 1.2]{Khare2014PotentialAA}). Hida also proves for some inner forms of $\GL_3/\IQ$ and $\GL_4/\IQ$ (see \cite[Corollary 6.3, Conjecture 7.1.]{HidaGLN}) that one indeed has in those cases
$$\dim(\Lambda\otimes \IQ_p)-\dim(\mathbf{h}^{n.ord}\otimes \IQ_p)=1$$
as predicted. One can in this more general setting study whether 
Hecke eigensystems coming from cuspidal automorphic forms deform into $p$-adic families with a Zariski-dense set of \emph{classical points}. Corollary \ref{cor:weights} shows that the support $\Supp_\Lambda(\mathbf{h}^{n.ord})$ either contains a formal torus $\calT$ of the appropriate dimension $d$, so that specializations along \emph{every} locally algebraic weight in $\calT(\overline{\IQ}_p)$ recover a classical Hecke eigensystem, or that the closure of the algebraic characters appearing on the support is of full dimension $d$. When the Zariski-density of algebraic, conjugate self-dual weights can be excluded as in \cite{Serban2018}, only the first option remains. \par
The case of $\GL_3/\IQ$ was studied by Ash, Pollack and Stevens \cite{AshPollackStevens}, who exhibited examples of so-called \emph{arithmetically rigid} eigenpackets \cite[Theorem 9.4.]{AshPollackStevens} which do not deform to $p$-adic families with a Zariski-dense set of arithmetic specializations. These were again found by an algebraic rigidity criterion (see \cite[Theorem 8.4.]{AshPollackStevens}) in conjunction with explicit computation. We note that symmetric square lifts to $\GL_3$ of ordinary newforms $f$ do indeed deform to $p$-adic families, lifting the Hida family for $\GL_2$. These lifts are all essentially self-dual (see \cite[Section 8]{AshPollackStevens} for details) and in fact the authors conjecture that outside of the essentially self-dual case, arithmetic rigidity holds (\cite[Conjecture 0.1.]{AshPollackStevens}). It would be interesting to investigate such matters further. \par
On the Galois side, conjecturally the $p$-adic universal Hecke algebras in question can be identified with suitable Galois deformation rings and similar density questions arise.
For instance, inspired by work of Gouv\^ea and Mazur's construction of the infinite fern, B\"ockle \cite{Boeckledensity}, Chenevier \cite{ChenevierInfiniteFern}  and Allen \cite{AllenPolarized} have shown density results for geometric (in the sense of the Fontaine-Mazur conjecture) or automorphic points in deformation spaces. Note however that these results rely on some form of \emph{oddness} (as in \cite[\S 1]{ClozelHarrisTaylor}) hypothesis. In general such questions again are subtle and density does not always hold (see e.g.,  \cite[Thm. 5.1, Cor. 5.2.]{CalegariFontaineMazurII}) nor is it expected (see e.g., \cite[Conjecture 1.3]{MR2461903}). \par
Concretely, over CM fields $F$, Childers \cite{Childers2021GaloisDS} extends work of Calegari and Mazur \cite[Section 7]{MR2461903} and given a fixed, nearly ordinary above $p$ residual representation $\bar{\rho}:\Gamma_{F,\Sigma}\to G(\IF_q)$ satisfying a list of assumptions (including $\bar{\rho}(\Gamma_{F,\Sigma})\supseteq [G,G](\IF_q)$, see \cite[Theorem 1.4.]{Childers2021GaloisDS} for details), constructs deformations
$$\rho:\Gamma_{F}\to G(W(\IF_q)[[X_1,\ldots X_n]])$$ 
of $\bar{\rho}$ which are almost everywhere unramified and nearly ordinary above $p$. Here $G$ denotes a split connected reductive group over the Witt vectors $W(\IF_q)$ and $\Gamma_F, \Gamma_{F,\Sigma}$ denote the absolute Galois group of $F$ and its maximal unramified quotient outside a finite set of primes $\Sigma$, respectively. Note that $n=\frac{[F:\IQ]}{2}\dim \mathfrak{t}^0$ where $\mathfrak{t}^0$ is the Lie algebra of the maximal torus of the derived group $[G,G]$. Furthermore, Childers shows that the $\overline{\IQ}_p$-points on this formal disk of deformations which are geometric with parallel Hodge-Tate weights have Zariski-closure of positive codimension in the rigid analytic Zariski topology on the generic fiber of $\Spf(W(\IF_q)[[X_1,\ldots X_n]])$. In particular, points coming from automorphic forms should be geometric with parallel Hodge-Tate weights and thus have positive codimension. \par 
It would be interesting to see if the stronger result of positive codimension in the formal Zariski topology on $\Spf(W(k)[[X_1,\ldots X_n]])$ can be shown using rigidity methods as a tool, as for instance this would imply finiteness when $G=\GL_2$ and $F$ is imaginary quadratic. In examining these and similar density considerations, additional arguments will be required. Indeed our results show the usefulness and limitations of rigidity or unlikely intersection results. Nevertheless, we hope they provide one of the ingredients needed to shed more light on questions related to the sparsity of genuine automorphic forms when $\ell_0\neq 0$. 

\section*{Acknowledgements}
We would like to thank Frank Calegari, Dragos Ghioca and Harald Grobner for helpful conversations on some of the contents of this paper. We thank Congling Qiu for highlighting an issue with a previous version of this paper that ultimately led to a substantial revision. Finally we thank the host institutions, both the University of Vienna and the Swiss Polytechnical Federal Institute of Lausanne (EPFL), as well as the number theory research groups there, for providing a good environment where the research for this article could be carried out.

\nocite{}
\bibliography{rigidbiblio.bib}

\end{document}